\documentclass[10pt]{amsart}
\usepackage[utf8]{inputenc}
\usepackage{color}
\usepackage{bbm}
\usepackage{amsmath,amssymb}
\usepackage{tikz}
\usetikzlibrary{arrows}
\usepackage{enumerate}
\usepackage{hyperref}
\usepackage{mathrsfs}

\newtheorem{thm}{Theorem}[section]

\newtheorem{lem}[thm]{Lemma}

\theoremstyle{definition}
\newtheorem{defin}[thm]{Definition}
\theoremstyle{remark}
\newtheorem*{rem}{Remark}

\numberwithin{equation}{section}


\newcommand{\lin}{\operatorname{span}}
\newcommand{\supp}{\operatorname{supp}}

\newcommand{\dif}{\,\mathrm{d}}

\newcommand{\charfun}{\ensuremath{\mathbbm 1}}

\allowdisplaybreaks

\begin{document}
\title{Spline characterizations of the Radon-Nikod\'{y}m property}

\author[M. Passenbrunner]{Markus Passenbrunner}
\address{Institute of Analysis, Johannes Kepler University Linz, Austria, 4040 Linz, Altenberger Strasse 69}
\email{markus.passenbrunner@jku.at}
\subjclass[2010]{65D07, 46B22, 42C10}

\date{\today}
\begin{abstract}
	We give necessary and sufficient conditions for a Banach space $X$ having the
	Radon-Nikod\'{y}m property in terms of polynomial spline sequences.
\end{abstract}
\maketitle
\section{Introduction and Preliminaries}
The aim of this paper is to prove new characterizations of the Radon-Nikod\'{y}m
property for Banach spaces in terms of polynomial spline sequences 
in the spirit of the corresponding martingale results (see Theorem
\ref{thm:rnp_dentable}).
We thereby continue the line of research about extending martingale results
to also cover (general) spline sequences that is carried out in  \cite{Shadrin2001, PassenbrunnerShadrin2014,
Passenbrunner2014, MuellerPassenbrunner2017, Passenbrunner2017,
KeryanPassenbrunner2017}.
We refer to the book \cite{DiestelUhl1977} by J. Diestel and J.J. Uhl
for basic facts on martingales and vector measures; here, we only give the
necessary notions to define the Radon-Nikod\'{y}m property below.
Let $(\Omega,\mathcal A)$ be a measure space and $X$ a Banach space.
Every $\sigma$-additive map $\nu:\mathcal A\to X$ is called a \emph{vector
measure}.
The \emph{variation} $|\nu|$ of $\nu$ is the set function
\begin{equation*}
	|\nu|(E) = \sup_\pi \sum_{A\in\pi} \|\nu(A)\|_X,
\end{equation*}
where the supremum is taken over all partitions $\pi$ of $E$ into a finite number
of pairwise disjoint members of $\mathcal A$.
If $\nu$ is of bounded variation, i.e., $|\nu|(\Omega)<\infty$, the variation
$|\nu|$ is $\sigma$-additive.
If $\mu:\mathcal A \to [0,\infty)$ is a measure and $\nu:\mathcal A\to X$ is
	a vector measure, $\nu$ is called \emph{$\mu$-continuous} if
	$\lim_{\mu(E)\to 0} \nu(E)=0$ for all $E\in\mathcal A$.
In the following, $L^p_X = L^p_X(\Omega,\mathcal A,\mu)$
will denote the Bochner-Lebesgue space of $p$-integrable Bochner measurable
functions $f:\Omega\to X$ and if $X=\mathbb{R}$, we simply write $L^p$ instead
of $L^p_\mathbb R$.
	\begin{defin}
		A Banach space $X$ has the \emph{Radon-Nikod\'{y}m property
	(RNP)} if
		for every measure space $(\Omega,\mathcal A)$, for every
		positive measure $\mu$ on $(\Omega,\mathcal A)$ and for every
		$\mu$-continuous vector measure $\nu$ of bounded variation,
		there exists a function $f\in L^1_X(\Omega,\mathcal A,\mu)$ such
		that
		\begin{equation*}
			\nu(A) = \int_A f\dif\mu,\qquad A\in\mathcal A.
		\end{equation*}
	\end{defin}

Additionally, recall that a sequence $(f_n)$ in $L^1_X$ is
\emph{uniformly integrable} if the sequence $(\|f_n\|_X)$ is bounded in $L^1$
and, for any $\varepsilon>0$, there exists $\delta>0$ such that
\begin{equation*}
	\mu(A) < \delta \quad \Longrightarrow \quad \sup_n \int_A \|f_n\|_X
	\dif\mu < \varepsilon, \qquad A\in \mathcal A.
\end{equation*}

We have
the following characterization of the Radon-Nikod\'{y}m property in terms of 
martingales, see e.g. {\cite[p. 50]{Pisier2016}}.
\begin{thm}\label{thm:rnp_dentable}
	For any $p\in (1,\infty)$, the following statements about a Banach space
	$X$ are equivalent:
	\begin{enumerate}[(i)]
		\item $X$ has the Radon-Nikod\'{y}m property (RNP),
		\item every $X$-valued martingale bounded in $L^1_X$ converges
			almost surely,
		\item every uniformly integrable $X$-valued martingale converges
			almost surely and in $L^1_X$,
		\item every $X$-valued martingale bounded in $L^p_X$ converges
			almost surely and in $L^p_X$.
	\end{enumerate}
\end{thm}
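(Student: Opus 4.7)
The implications (ii)$\Rightarrow$(iii) and (ii)$\Rightarrow$(iv) follow from the vector-valued Vitali theorem once a.\,s.\ convergence has been established, so I organise the argument as (i)$\Rightarrow$(ii), then (ii)$\Rightarrow$(iii) and (ii)$\Rightarrow$(iv), and finally (iii)$\Rightarrow$(i) together with (iv)$\Rightarrow$(i). Throughout I assume $\mu$ to be finite, which is a standard reduction: bounded variation renders $|\nu|$ finite, and an exhaustion by sets of finite $\mu$-measure handles the martingale direction.

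For (i)$\Rightarrow$(ii), let $(f_n)$ be a martingale adapted to $(\mathcal A_n)$ with $M:=\sup_n\|f_n\|_{L^1_X}<\infty$, and define $\nu$ on the algebra $\mathcal B:=\bigcup_n\mathcal A_n$ by $\nu(A)=\int_A f_n\dif\mu$ whenever $A\in\mathcal A_n$. The martingale property makes this unambiguous, one reads off $|\nu|(\Omega)\le M$, and $\mu$-continuity of each individual $f_n$ transfers to $\nu$. Bounded variation together with $\mu$-continuity upgrades finite additivity to countable additivity on $\mathcal B$, so the Carath\'{e}odory-type extension theorem for Banach-space-valued measures produces a vector measure on $\sigma(\mathcal B)$ to which I apply (i) and obtain $f\in L^1_X$ with $\nu(A)=\int_A f\dif\mu$; equivalently $f_n=\bE(f\mid\mathcal A_n)$. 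Finally, conditional expectations of a fixed $L^1_X$-function converge a.\,s.\ in \emph{every} Banach space: simple $X$-valued approximations $f_\varepsilon$ converge by Doob's scalar theorem applied coefficientwise, and $\sup_n\|\bE(f-f_\varepsilon\mid\mathcal A_n)\|_X$ is controlled in probability via the scalar Doob maximal inequality applied to $\|f-f_\varepsilon\|_X\in L^1$.

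For (iii)$\Rightarrow$(i), choose an increasing sequence of finite measurable partitions $(\pi_n)$ generating $\mathcal A$ modulo $\mu$-null sets and define $f_n=\sum_{A\in\pi_n,\,\mu(A)>0}\nu(A)\mu(A)^{-1}\charfun_A$, which is a martingale with $\|f_n\|_{L^1_X}\le|\nu|(\Omega)$. Writing $g=\dif|\nu|/\dif\mu\in L^1$ (obtained from the scalar Radon-Nikod\'{y}m theorem applied to $|\nu|$), the pointwise estimate $\|f_n\|_X\le\bE(g\mid\sigma(\pi_n))$ exhibits $(f_n)$ as uniformly integrable, and (iii) yields $f\in L^1_X$ with $f_n\to f$ both a.\,s.\ and in $L^1_X$. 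Passing to the limit in $\nu(A)=\int_A f_n\dif\mu$ for $A\in\bigcup_n\sigma(\pi_n)$ and extending through a Dynkin class argument identifies $f$ as the Radon-Nikod\'{y}m derivative of $\nu$. The same construction proves (iv)$\Rightarrow$(i) once one performs the standard reduction to vector measures with $\dif|\nu|/\dif\mu\in L^\infty$; then $\|f_n\|_X\le\|g\|_\infty$, so $(f_n)$ is uniformly bounded and \emph{a fortiori} $L^p_X$-bounded.

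For (ii)$\Rightarrow$(iii), uniform integrability paired with the a.\,s.\ limit from (ii) yields $L^1_X$-convergence by the vector Vitali theorem. For (ii)$\Rightarrow$(iv), $L^p_X$-boundedness with $p>1$ renders $(\|f_n\|_X^p)$ uniformly integrable (e.g.\ by de la Vall\'{e}e--Poussin), whence $\|f_n-f\|_X^p\le 2^{p-1}(\|f_n\|_X^p+\|f\|_X^p)$ is uniformly integrable as well and $L^p_X$-convergence follows from a.\,s.\ convergence. The principal technical obstacle I anticipate is the Carath\'{e}odory extension inside (i)$\Rightarrow$(ii): one must extract $\sigma$-additivity of $\nu$ on the generating algebra from the $\mu$-continuity hypothesis, using bounded variation in a way that survives the passage to the generated $\sigma$-algebra. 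Everything else reduces to the scalar Doob/Vitali machinery or to routine density arguments.
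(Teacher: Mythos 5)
The paper does not prove Theorem \ref{thm:rnp_dentable} itself; it cites \cite[p.~50]{Pisier2016}. So I assess your proposal on its own merits, and there is a genuine gap in the step (i)$\Rightarrow$(ii).

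You define $\nu$ on the algebra $\mathcal B=\bigcup_n\mathcal A_n$ by $\nu(A)=\int_A f_n\dif\mu$ for $A\in\mathcal A_n$ and claim that $\mu$-continuity of the individual $f_n$ transfers to $\nu$. For a merely $L^1_X$-bounded martingale this is false. Take $\Omega=[0,1]$ with Lebesgue measure, the dyadic filtration, a unit vector $x\in X$, and $f_n=2^n\charfun_{[0,2^{-n}]}\otimes x$. Then $(f_n)$ is a martingale with $\|f_n\|_{L^1_X}=1$ for all $n$, yet $\nu([0,2^{-n}])=x$ for every $n$ while $\mu([0,2^{-n}])\to 0$, so $\nu$ is not $\mu$-continuous on $\mathcal B$. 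Consequently $\nu$ is not $\sigma$-additive on $\mathcal B$ either (it sends the disjoint union $[0,2^{-N}]=\bigcup_{n\ge N}(2^{-n-1},2^{-n}]$ to $x$ while each dyadic piece has $\nu$-value $0$), so there is no Carath\'eodory extension, no RNP application, and no $f\in L^1_X$ with $f_n=\bE(f\mid\mathcal A_n)$ (such an $f$ would force $\int f=x\neq 0$, yet $f_n\to 0$ boundedly off a null set). In short, your argument as written proves (i)$\Rightarrow$(iii), not (i)$\Rightarrow$(ii), because the transfer of $\mu$-continuity to $\nu$ on the union algebra is exactly uniform integrability of $(f_n)$.

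To repair the chain you must pass from $L^1$-boundedness to uniform integrability by localisation, typically a stopping time. For $j\in\bN$ set $\tau_j=\inf\{n:\|f_n\|_X>j\}$ and consider the stopped martingale $f_{n\wedge\tau_j}$. Using that $(\|f_n\|_X)$ is a nonnegative submartingale and Doob's weak $(1,1)$ inequality, one shows $\mu(\tau_j<\infty)\le M/j$ and that $(f_{n\wedge\tau_j})_n$ is uniformly integrable (its norm is dominated by $\max\{j,\ \|f_{\tau_j}\|_X\charfun_{\{\tau_j<\infty\}}\}\in L^1$). Your $\nu$-construction then applies to this stopped martingale, giving a.s.\ convergence on $\{\tau_j=\infty\}$; letting $j\to\infty$ exhausts $\Omega$ up to a null set. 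This extra step is precisely what distinguishes (ii) from (iii), and it is the one missing ingredient in your proposal. The remaining implications — (iii)$\Rightarrow$(i) and (iv)$\Rightarrow$(i) via the partition martingale dominated by $\bE(g\mid\sigma(\pi_n))$, and (ii)$\Rightarrow$(iii),(iv) via Vitali — are in order, apart from two small imprecisions: de la Vall\'ee--Poussin applied to $L^p$-boundedness gives uniform integrability of $(\|f_n\|_X)$, not of $(\|f_n\|_X^p)$; for the latter you should invoke Doob's $L^p$ maximal inequality to dominate $\|f_n\|_X^p$ by the integrable function $(\sup_n\|f_n\|_X)^p$. Also, in (iii)$\Rightarrow$(i) the assumption of a countably generated $\mathcal A$ is implicit in choosing the sequence $(\pi_n)$; for general $\mathcal A$ one needs the usual net-martingale Cauchy argument over the directed set of all finite partitions.
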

\begin{rem}
	For the above equivalences, it is enough to consider $X$-valued martingales defined
	on the unit interval with respect to Lebesgue measure and the dyadic
	filtration (cf.
	\cite[p. 54]{Pisier2016}).
\end{rem}

Now, we describe the general framework that allows us to replace properties 
(ii)--(iv) with its spline versions.

\begin{defin}
	A sequence of $\sigma$-algebras $(\mathcal F_n)_{n\geq 0}$ in $[0,1]$ 
	is called an \emph{interval
	filtration} if $(\mathcal F_n)$ is increasing and each  
	$\mathcal F_n$ is generated by a finite partition of $[0,1]$ into
	intervals of positive Lebesgue measure.
\end{defin}
For an interval filtration $(\mathcal F_n)$, we define $\Delta_n := \{\partial A: A
	\text{ atom of }\mathcal F_n\}$ to be the set of all
	endpoints of atoms in $\mathcal F_n$. For a fixed positive integer $k$, set 
	\begin{align*}
		S_n^{(k)} = \{ f\in C^{k-2}[0,1]:\quad &f \text{ is a polynomial of order
		$k$} 
		\text{ on each atom of } \mathcal F_n \},
	\end{align*}
	where $C^n[0,1]$ denotes the space of $n$ times continuously
	differentiable, real valued functions on $[0,1]$ 
	and the order $k$ of a
	polynomial $p$ is related to  the degree $d$ of $p$ by the formula
	$k=d+1$.

	The finite dimensional space $S_n^{(k)}$ admits a very special basis
	$(N_i)$ of non-negative and
	uniformly bounded functions, called
	B-spline basis, that forms a partition of unity, i.e. $\sum_i N_i(t) = 1$ for
	all $t\in [0,1]$, and the support of each $N_i$ consists of the union of
	$k$ neighboring atoms of $\mathcal F_n$. If $n\geq m$ and
	$(N_i),(\tilde{N}_i)$ are the B-spline bases of $S_n^{(k)}$ and
	$S_m^{(k)}$ respectively, we can write each $f\in S_m^{(k)}$ as
	$f= \sum a_i \tilde{N}_i = \sum b_i N_i$ for some coefficients
	$(a_i),(b_i)$ since $S_m^{(k)}\subset S_n^{(k)}$. Those coefficients are
	related to each other in the way that each
	$b_i$ is a  convex combination of the coefficients
	$(a_i)$.
	For more information on spline
	functions, see
	\cite{Schumaker2007}.

	Additionally, we let $P_n^{(k)}$ be the orthogonal projection operator onto
	$S_n^{(k)}$ with
	respect to $L^2[0,1]$ equipped with the Lebesgue measure $|\cdot|$. Each space
	$S_n^{(k)}$
	is finite dimensional and B-Splines are uniformly bounded, therefore, 
	$P_n^{(k)}$ can be extended to $L^1$ and $L^1_X$ satisfying
	$P_n^{(k)}(f\otimes x) = (P_n^{(k)}f) \otimes x$ for all $f\in L^1$ and
	$x\in X$, where $f\otimes x$ denotes the function $t\mapsto f(t)x$.
	Moreover, by $S_{n}^{(k)}\otimes X$, we denote the space $\lin\{f\otimes x : f\in
		S_n^{(k)}, x\in X\}$.
	
	\begin{defin}
		Let $X$ be a Banach space and
		$(f_n)_{n\geq 0}$ be a sequence of functions in $L^1_X$. Then, $(f_n)$
		is an ($X$-valued) \emph{$k$-martingale spline sequence
		adapted to $(\mathcal F_n)$},
		if
			$(\mathcal F_n)$ is an interval filtration
			and
			\begin{equation*}
				P_n^{(k)} f_{n+1} =f_n,\qquad n\geq 0.
			\end{equation*}
	\end{defin}

	This definition resembles the definition of martingales with the
	conditional expectation operator replaced by $P_n^{(k)}$.
	For splines of order $k=1$, i.e. piecewise constant functions,
	the operator $P_n^{(k)}$ even is the conditional
	expectation operator with respect to the $\sigma$-algebra $\mathcal
	F_n$.

	Many of the results that are true for martingales (such as Doob's
	inequality, the martingale convergence theorem or Burkholder's
	inequality) in fact carry over to
	$k$-martingale spline sequences corresponding to an \emph{arbitrary} interval
	filtration as the following two theorems show:
	\begin{thm}\label{thm:splines}
		For any positive integer $k$, any interval filtration
		$(\mathcal F_n)$ and any Banach space $X$, the following
		assertions are true:
		\begin{enumerate}[(i)]
			\item\label{it:splines1} 
				there exists
				a constant $C_k$ only depending on $k$ such that 
				\[
					\sup_n\| P_n^{(k)} : L^1_X \to L^1_X \| \leq
					C_k,
				\]
			\item \label{it:splines2}
				there exists a constant $C_k$ only depending on
				$k$ such that for any $X$-valued $k$-martingale
				spline sequence
				$(f_n)$ and any
				$\lambda>0$, 
				\begin{equation*}
					|\{ \sup_n \|f_n\|_X > \lambda \}| \leq C_k
				\frac{\sup_n\|f_n\|_{L^1_X}}{
				\lambda},
				\end{equation*}

			\item\label{it:splines3} for all $p\in (1,\infty]$ there exists a constant
				$C_{p,k}$ only depending on $p$ and
				$k$ such that for all $X$-valued $k$-martingale
				spline sequence
				$(f_n)$,
				\begin{equation*}
					\big\| \sup_n \|f_n\|_X \big\|_{L^p}
					\leq C_{p,k}
					\sup_n\|f_n\|_{L^p_X},\ 
				\end{equation*}
			\item\label{it:splines4}
				if $X$ has the RNP and $(f_n)$ is an
				$L^1_X$-bounded $k$-martingale spline sequence,
				$(f_n)$ converges
				a.s. to some $L^1_X$-function.
		\end{enumerate}
	\end{thm}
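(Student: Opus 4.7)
Parts (i)--(iii) all reduce, via the tensor identity $P_n^{(k)}(f\otimes x)=(P_n^{(k)}f)\otimes x$ and the integral-kernel representation of $P_n^{(k)}$, to scalar results. For (i), Shadrin's theorem \cite{Shadrin2001} gives $\sup_n\|P_n^{(k)}:L^\infty\to L^\infty\|\leq C_k$. Writing $P_n^{(k)}f(t)=\int_0^1 K_n(t,s)f(s)\dif s$ with symmetric reproducing kernel $K_n$, this bound says $\sup_t\int|K_n(t,s)|\dif s\leq C_k$, equivalently (by symmetry of $K_n$) $\sup_s\int|K_n(t,s)|\dif t\leq C_k$, which is the $L^1\to L^1$ bound. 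The vector version follows from the pointwise estimate $\|P_n^{(k)}f(t)\|_X\leq\int|K_n(t,s)|\|f(s)\|_X\dif s$ together with Fubini.

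For (ii), I would invoke the weak-type $(1,1)$ inequality for the maximal spline-projection operator $g\mapsto\sup_n\|P_n^{(k)}g\|_X$ on $L^1_X$, proved in the scalar case in \cite{PassenbrunnerShadrin2014} and in the vector case in \cite{MuellerPassenbrunner2017}. Since $S_m^{(k)}\subset S_{m+1}^{(k)}$, the orthogonal projections satisfy $P_m^{(k)}P_{m+1}^{(k)}=P_m^{(k)}$; iterating gives $P_m^{(k)}f_N=f_m$ for every $N\geq m$. Thus $\sup_{m\leq N}\|f_m\|_X\leq\sup_m\|P_m^{(k)}f_N\|_X$, and applying the weak-type inequality to $g:=f_N$ produces $|\{\sup_{m\leq N}\|f_m\|_X>\lambda\}|\leq C_k\sup_n\|f_n\|_{L^1_X}/\lambda$; letting $N\to\infty$ and using monotone convergence of the sets gives (ii). Part (iii) then follows by Marcinkiewicz interpolation between (ii) and the pointwise $L^\infty$ bound coming from Shadrin; the endpoint $p=\infty$ is trivial since $\|\sup_n\|f_n\|_X\|_{L^\infty}=\sup_n\|f_n\|_{L^\infty_X}$.

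For (iv), my plan has two steps: (a) produce $f\in L^1_X$ with $f_n=P_n^{(k)}f$ for all $n$, and (b) deduce $P_n^{(k)}f\to f$ a.s. Step (b) is a routine density argument: for $g\in S_m^{(k)}\otimes X$ one has $P_n^{(k)}g=g$ for all $n\geq m$ (since $S_m^{(k)}\otimes X\subset S_n^{(k)}\otimes X$), so convergence is trivial there; density of $\bigcup_m S_m^{(k)}\otimes X$ in the appropriate $L^1_X$-subspace combined with the maximal inequality (ii) then extends this to general $f$ via a standard $3\varepsilon$-argument. Step (a) is the genuinely hard part. In the classical martingale setting one defines $\nu(A):=\lim_n\int_A f_n\dif\mu$ on the algebra $\bigcup_n\cF_n$, well-defined by the tower property, and invokes the RNP of $X$ to obtain a density. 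That recipe fails here because $P_n^{(k)}$ does not preserve integrals over atoms of $\cF_m$; only the total integral over $[0,1]$ is preserved, since $1\in S_n^{(k)}$. The remedy is to exploit the knot-insertion convex-combination identities for B-spline coefficients recalled in the introduction: combined with $L^1_X$-boundedness, these furnish a bounded-variation $X$-valued additive set function related to $(f_n)$, whose RNP density is the sought $f$. The principal obstacle is precisely this replacement of the tower property by convex-combination estimates on B-spline coefficients.
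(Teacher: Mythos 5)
The paper itself does not prove Theorem~\ref{thm:splines}; it states the four assertions and attributes \eqref{it:splines1} to \cite{Shadrin2001} and \eqref{it:splines2}--\eqref{it:splines4} to \cite{PassenbrunnerShadrin2014,MuellerPassenbrunner2017}. So there is no in-paper argument to match your sketch against; what I can do is assess whether your reconstruction from those sources is sound.

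For \eqref{it:splines1}--\eqref{it:splines3} your plan is correct and is indeed the standard route. The kernel of $P_n^{(k)}$ is symmetric (orthogonal projection), so Shadrin's $L^\infty$-bound transfers to $L^1$, and the pointwise estimate $\|P_n^{(k)}f(t)\|_X\leq\int|K_n(t,s)|\,\|f(s)\|_X\,\mathrm{d}s$ gives the vector version. Your use of the nesting identity $P_m^{(k)}P_{m+1}^{(k)}=P_m^{(k)}$ to write $f_m=P_m^{(k)}f_N$ for $m\leq N$, then applying the vector-valued weak-$(1,1)$ maximal inequality to $g=f_N$, is exactly right; and \eqref{it:splines3} follows by Marcinkiewicz interpolation of the sublinear operator $g\mapsto\sup_n\|P_n^{(k)}g\|_X$ between the weak-$(1,1)$ and trivial $L^\infty$ endpoints, again applied to $g=f_N$ and then letting $N\to\infty$. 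One small wording issue: you should interpolate at the level of the maximal operator and only then substitute $g=f_N$, rather than interpolating ``between (ii) and an $L^\infty$ bound for the sequence.''

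For \eqref{it:splines4} you have correctly located the crux, but the step you label ``the genuinely hard part'' remains a genuine gap. Two comments. First, more survives of the tower property than you state: since $P_m^{(k)}$ is self-adjoint and $P_m^{(k)}s=s$ for $s\in S_m^{(k)}$, one has for every $n\geq m$
\begin{equation*}
  \int_0^1 f_n(t)\,s(t)\,\mathrm{d}t \;=\; \int_0^1 \bigl(P_m^{(k)}f_n\bigr)(t)\,s(t)\,\mathrm{d}t \;=\; \int_0^1 f_m(t)\,s(t)\,\mathrm{d}t, \qquad s\in S_m^{(k)},
\end{equation*}
so the $X$-valued functional $s\mapsto\int f_n\,s$ is stable on $\bigcup_m S_m^{(k)}$, not only at $s\equiv 1$. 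This is the spline substitute for the martingale identity $\int_A f_n\,\mathrm{d}\mu=\nu(A)$, and it is the starting point for building the object whose RNP density you want. Second, turning this functional into a $\mu$-continuous $X$-valued measure of bounded variation on a $\sigma$-algebra rich enough to recover $f$ (and then showing $P_n^{(k)}f=f_n$) is where the real work lies, and your proposal does not supply it; as written, ``these furnish a bounded-variation additive set function'' is a statement of intent, not an argument. That construction --- together with the separate treatment needed when $\bigcup_m S_m^{(k)}$ is not dense, i.e.\ when the knots do not accumulate everywhere --- is precisely the content of \cite{PassenbrunnerShadrin2014,MuellerPassenbrunner2017}, and you would need to reproduce it to turn your Step~(a) into a proof.
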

	\eqref{it:splines1} is proved in \cite{Shadrin2001}  and
	\eqref{it:splines2}--\eqref{it:splines4} are proved (effectively) in \cite{PassenbrunnerShadrin2014,
				MuellerPassenbrunner2017}.
				
				\begin{thm}[\cite{Passenbrunner2014}]\label{thm:splines5}
				For all $p\in(1,\infty)$ and all positive 
				integers $k$, scalar-valued $k$-spline-differences
				converge unconditionally in $L^p$, i.e. for all
				$f\in L^p$,
				\begin{equation*}
					\big\|\sum_n \pm (P_n^{(k)} -
					P_{n-1}^{(k)})f \big\|_{L^p}
					\leq C_{p,k} \|f\|_{L^p},
				\end{equation*}
				for some constant $C_{p,k}$ depending only on
				$p$ and $k$.
	\end{thm}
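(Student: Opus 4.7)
The plan is to fix a sign sequence $\epsilon = (\epsilon_n)_{n\geq 1} \in \{-1,+1\}^{\mathbb N}$ and the interval filtration $(\mathcal F_n)$, and to show that the operator
\begin{equation*}
  T_\epsilon f := \sum_{n\geq 1} \epsilon_n D_n f, \qquad D_n := P_n^{(k)} - P_{n-1}^{(k)},
\end{equation*}
is bounded on $L^p$ with norm $\leq C_{p,k}$ independent of $\epsilon$ and of the filtration. The standard scheme I would follow is: $L^2$-boundedness, weak-type $(1,1)$, Marcinkiewicz interpolation for $p\in(1,2]$, and duality for $p\in[2,\infty)$.

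The $L^2$ step is essentially free: since $S_{n-1}^{(k)} \subset S_n^{(k)}$ and $P_n^{(k)}$ is the orthogonal projection, the ranges of the $D_n$ are pairwise orthogonal in $L^2$, so
\begin{equation*}
  \|T_\epsilon f\|_{L^2}^2 = \sum_{n\geq 1}\|D_n f\|_{L^2}^2 \leq \|f\|_{L^2}^2.
\end{equation*}
For the weak-type $(1,1)$ estimate I would run a Calder\'on--Zygmund argument relative to the interval filtration $(\mathcal F_n)$ itself. Given $\lambda>0$ and $f\in L^1$, let $\tau$ be the first time the average of $|f|$ on the current atom exceeds $\lambda$, and split $f=g+b$ with $b=\sum_J b_J$ where the $b_J$ are supported on pairwise disjoint stopping atoms $J$, have mean zero, and satisfy $\|b_J\|_{L^1}\lesssim \lambda|J|$, while $|g|\lesssim \lambda$ off the bad set. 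The good part is handled by Chebyshev together with the $L^2$-bound. The bad part is the main content: by standard reductions it suffices to prove, uniformly in $n$ and $J$,
\begin{equation*}
  \int_{[0,1]\setminus J^\ast}|D_n b_J(x)|\dif x \lesssim \|b_J\|_{L^1},
\end{equation*}
where $J^\ast$ is a bounded neighborhood of $J$ (enlarged by an amount controlled by $k$). Summing over $J$ then yields $|\{|T_\epsilon b|>\lambda\}|\lesssim \|f\|_{L^1}/\lambda$.

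The key input for the displayed estimate is the fact, due to Shadrin and used throughout \cite{Shadrin2001, PassenbrunnerShadrin2014}, that the kernel $K_n^{(k)}(x,y)$ of $P_n^{(k)}$ obeys a geometric off-diagonal decay of the form $|K_n^{(k)}(x,y)| \leq C_k\gamma^{j(x,y)}/\max(|I_x^n|,|I_y^n|)$, where $\gamma\in(0,1)$ depends only on $k$ and $j(x,y)$ counts how many atoms of $\mathcal F_n$ separate $x$ and $y$. Combined with the cancellation $\int b_J=0$, this exponential decay yields the required integrated kernel estimate for $D_n$ and crucially makes the resulting constant summable both in $n$ and in the distance to $J$. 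This is the main obstacle of the proof: one must verify that the filtration-free kernel decay is strong enough to absorb the failure of locality that distinguishes $D_n$ from a martingale difference (for which $D_n b_J$ simply vanishes outside $J$ when $n\geq \tau$), and to justify splitting the sum over $n$ according to whether $n$ is above or below the stopping level of $J$.

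Once the weak-type $(1,1)$ and the $L^2$-bound are in hand, Marcinkiewicz interpolation gives $\|T_\epsilon f\|_{L^p}\leq C_{p,k}\|f\|_{L^p}$ for $p\in(1,2]$ with $C_{p,k}$ independent of $\epsilon$ and of $(\mathcal F_n)$. Because each $P_n^{(k)}$ is self-adjoint (it is an $L^2$-orthogonal projection), the $D_n$'s are self-adjoint, hence $T_\epsilon^\ast = T_\epsilon$ (with respect to the $L^2$ pairing), and duality extends the bound to $p\in[2,\infty)$. Applying this uniform bound to partial sums and taking $\epsilon_n=\pm 1$ yields the claimed inequality with $C_{p,k}$ depending only on $p$ and $k$.
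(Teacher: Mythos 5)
A structural remark first: the theorem is cited from \cite{Passenbrunner2014} and is not proved in the present paper, so there is no internal proof to compare against; your proposal must be measured against the argument of that reference. Your overall scheme --- $L^2$ boundedness by orthogonality, a Calder\'on--Zygmund decomposition adapted to the interval filtration for a weak-type $(1,1)$ bound, Marcinkiewicz interpolation for $1<p\leq 2$, and duality via self-adjointness of the $D_n$ for $p\geq 2$ --- has the right shape and matches the strategy of \cite{Passenbrunner2014}; the $L^2$, interpolation, and duality steps are fine as you describe them.

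The gap is in the weak-type $(1,1)$ estimate for the bad part. As stated, your reduction is off: a bound on $\int_{[0,1]\setminus J^*}|D_n b_J|$ that is \emph{uniform} in $n$ does not sum over $n$, whereas what you actually need is
\begin{equation*}
  \sum_n \int_{[0,1]\setminus J^*} |D_n b_J(x)|\,\mathrm{d}x \lesssim \|b_J\|_{L^1},
\end{equation*}
a convergent series. You acknowledge this later (``summable both in $n$ and in the distance to $J$''), but the mechanism you offer --- the off-diagonal decay of $K_n^{(k)}$ combined with $\int b_J=0$ --- does not produce it. That decay gives for the $n$-th term a bound of order $\gamma^{j_n}\|b_J\|_{L^1}$, where $j_n$ counts how many $\mathcal F_n$-atoms separate $J$ from $[0,1]\setminus J^*$; since the interval filtration is arbitrary, nothing forces $j_n$ to grow with $n$ (the mesh may refine exclusively far from $J$), and then the sum over $n$ diverges. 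The missing ingredient is a localization of the \emph{difference} $D_n=P_n^{(k)}-P_{n-1}^{(k)}$ rather than of $P_n^{(k)}$ itself: after harmlessly refining the filtration so that exactly one new breakpoint is inserted per step, $D_n$ is essentially a rank-one projection onto a single orthonormal spline that decays geometrically away from that new breakpoint, so the contributions of different $n$ concentrate in different regions of $[0,1]$, and a counting argument over the placement of the new breakpoints relative to $J$ is what closes the geometric series. This localization of $D_n$ around the newly inserted breakpoints, not the off-diagonal decay of $P_n^{(k)}$, is the technical heart of the proof in \cite{Passenbrunner2014}; your outline attributes the summability to the wrong estimate and does not supply the one that is actually needed.
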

	The martingale version of Theorem
	\ref{thm:splines5} is
	Burkholder's inequality, which precisely holds
	in the vector-valued setting for UMD-spaces $X$ (by
	the definition of UMD-spaces).
	It is an open problem whether Theorem \ref{thm:splines5} holds for
	UMD-valued $k$-martingale spline sequences in this generality, 
	but see  \cite{KamontMuller2006} for a special case.
	For more information on UMD-spaces, see e.g. \cite{Pisier2016}.

	\begin{defin}
		Let $X$ be a Banach space, $(\mathcal F_n)$ an interval filtration and $k$ a
		positive integer. Then, $X$ has the \emph{$((\mathcal
		F_n),k)$-martingale spline convergence property (MSCP)} if 
		all $L^1_X$-bounded $k$-martingale spline sequences adapted to
		$(\mathcal F_n)$ admit a
		limit almost surely.
	\end{defin}

	In this work, we prove the following characterization of the Radon-Nikod\'{y}m
	property in terms of $k$-martingale spline sequences.

\begin{thm}\label{thm:char}
	Let $X$ be a Banach space, $(\mathcal F_n)$ an interval filtration, $k$ a positive integer
	and $V$ the set of all accumulation points of $\cup_n \Delta_n$. Then, 
	$((\mathcal F_n),k)$-$\operatorname{MSCP}$ characterizes RNP if and only if $|V|>0$,
	i.e.,
	\begin{equation*}
		|V|>0\Longleftrightarrow\big( X \text{ has RNP}  \Longleftrightarrow X \text{ has }
		((\mathcal F_n),k)\text{-MSCP}\big).
	\end{equation*}
\end{thm}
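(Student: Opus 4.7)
The plan is to prove the equivalence by treating the two cases $|V|>0$ and $|V|=0$ separately. Theorem \ref{thm:splines}(iv) already gives that RNP implies $((\mathcal F_n),k)$-MSCP for every interval filtration, so the substance lies in the converses. Thus I must show: (i) if $|V|>0$, then every $X$ lacking RNP admits a divergent $L^1_X$-bounded $k$-martingale spline sequence adapted to $(\mathcal F_n)$; and (ii) if $|V|=0$, then every Banach space $X$ has $((\mathcal F_n),k)$-MSCP, so MSCP is too weak to single out RNP.

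For (i), pick a Lebesgue density point $t_0 \in V$. The positive density of $V$ and the accumulation property of $\bigcup_n\Delta_n$ at each point of $V$ let me inductively build a binary tree of atoms $(A_s)$ inside an initial atom $A_\emptyset \in \mathcal F_{m_0}$ containing $t_0$, indexed by finite binary strings $s$, where $A_s$ is an atom of $\mathcal F_{m_{|s|}}$ and $A_{s0}, A_{s1}$ are disjoint sub-atoms of $A_s$ of comparable Lebesgue measure. The branches of this tree select a Cantor-like positive-measure subset of $A_\emptyset$. By Theorem \ref{thm:rnp_dentable} and the remark following it, I take an $L^\infty_X$-bounded dyadic martingale $(g_j)$ on $[0,1]$ diverging on a set of positive measure, and transfer it to the tree: for $k=1$, this yields a tree-adapted martingale, which extends to an $\mathcal F_n$-martingale (hence a $1$-martingale spline sequence) by setting $f_n := P_n^{(1)} f_{m_{j(n)}}$; for $k\geq 2$, the direct pull-back fails to be a $k$-martingale spline, so I encode the dyadic martingale increments by $f_{m_{j+1}} := f_{m_j} + \sum_{|s|=j}(v_s \otimes \omega_s)$, where $v_s \in X$ is the increment of $g_j$ along the branch $s$ and $\omega_s$ is a unit-norm spline in $S_{m_{j+1}}^{(k)} \ominus S_{m_j}^{(k)}$ concentrated inside $A_s$. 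The $L^2$-orthogonality of $\omega_s$ to $S_{m_j}^{(k)}$ then automatically gives $P_{m_j}^{(k)} f_{m_{j+1}} = f_{m_j}$, while the concentration of $\omega_s$ forces divergence on a set of positive measure.

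For (ii), $\overline{\bigcup_n\Delta_n} = \bigcup_n\Delta_n \cup V$ is null, so the complement is a full-measure open set and decomposes as a disjoint union of open intervals $(J_\ell)$, each contained in a single atom of every $\mathcal F_n$; for each $\ell$, there is $n_0(\ell)$ so that $J_\ell$ lies in an atom $A_\ell$ of $\mathcal F_n$ for all $n \geq n_0(\ell)$, and $A_\ell$ is an atom of $\mathcal F_\infty := \sigma(\bigcup_n\mathcal F_n)$. Hence $f_n|_{A_\ell}$ is an $X$-valued polynomial of order $k$; writing it in a fixed polynomial basis with coefficients $c_j^{(n)} \in X$, I claim $c_j^{(n)}$ converges in $X$ as $n\to\infty$. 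Uniform boundedness of $\|c_j^{(n)}\|_X$ follows from the $L^1_X$-boundedness of $(f_n)$ together with standard B-spline coefficient stability. Convergence comes from the spline martingale identity: $f_{n+1}-f_n$ lies in $S_{n+1}^{(k)} \ominus S_n^{(k)}$, whose $L^2$-normalized generator is concentrated near the newly added knot of $\mathcal F_{n+1}\setminus\mathcal F_n$ and decays exponentially in atom-distance, via the inverse-Gram-matrix bound underlying Shadrin's projection estimate \cite{Shadrin2001}. Summing these contributions at $A_\ell$ gives Cauchy convergence of $(c_j^{(n)})_n$.

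The main obstacle is (ii) when $A_\ell$ has a $V$-point on its boundary: refinements can then recur at bounded atom-distance from $A_\ell$, and the naive geometric-series argument from exponential decay alone fails. The resolution is that such refinements must split atoms of vanishing Lebesgue measure (because $V$ has empty interior), which forces the $L^2$-normalized generator of $S_{n+1}^{(k)} \ominus S_n^{(k)}$ to carry correspondingly small $L^\infty$-mass on $A_\ell$; combined with the maximal inequality Theorem \ref{thm:splines}(ii) to control the amplitude in $X$, this supplies the extra decay needed for summability, hence for Cauchy convergence of the coefficients.
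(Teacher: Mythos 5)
Your outline correctly reduces the claim to two implications (after observing that Theorem \ref{thm:splines}\eqref{it:splines4} always gives RNP $\Rightarrow$ MSCP), but both branches contain gaps that the paper's proof avoids by leaning on two substantial theorems: for $|V|>0$ it invokes Theorem \ref{thm:general} (the dentability-based inductive construction via Lemma \ref{lem:moments}), and for $|V|=0$ it cites the result of \cite{MuellerPassenbrunner2017}. You propose self-contained replacements for both, and neither is airtight.

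For your part (i), the pull-back of a divergent dyadic martingale through a Cantor tree of atoms skips several essentials. First, for $k\geq 2$, after writing $f_{m_{j+1}} = f_{m_j} + \sum_s v_s\otimes\omega_s$ with $\omega_s\in S_{m_{j+1}}^{(k)}\ominus S_{m_j}^{(k)}$ supported in $A_s$, you need the resulting sequence to be bounded in $L^1_X$ (or $L^\infty_X$). The scalar martingale $(g_j)$ being bounded does not transfer automatically: the bumps $\omega_s$ have mean zero, hence take both signs, and $f_{m_j}(t)$ at a fixed $t$ is a sum of signed bump values, not a convex combination staying inside a bounded set $D_j$ as in the paper's Lemma \ref{lem:moments}(\ref{eq:lem2-3}). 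Second, "concentration of $\omega_s$ forces divergence on a set of positive measure" is asserted but not shown: $\omega_s(t)$ can be small (indeed zero) at $t$, so the increment $\|v_s\|\,|\omega_s(t)|$ is not bounded below pointwise, and one does not obtain the uniform separation $\|f_n(t)-f_{n-1}(t)\|_X\geq\delta$ that the paper engineers directly. Establishing lower bounds on a positive-measure core at every level, and carrying these bounds through $k$-fold smoothness constraints, is precisely the technical content of Lemma \ref{lem:moments} and its inductive use; your sketch does not replace it.

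For your part (ii), the stabilization claim is false as stated: if an endpoint $p$ of the gap interval $J_\ell$ lies in $V\setminus\cup_n\Delta_n$, then $p$ is never a knot and knots accumulate at $p$ from outside $J_\ell$, so the atom of $\mathcal F_n$ containing $J_\ell$ strictly shrinks forever and $J_\ell$ is never an atom of any $\mathcal F_n$ (it is only an atom of $\mathcal F_\infty$). Your "main obstacle" paragraph acknowledges this, but the proposed repair — combining exponential decay of the normalized orthogonal increment with the maximal inequality of Theorem \ref{thm:splines}\eqref{it:splines2} to squeeze out "extra decay" — is not an argument; it names the two ingredients of the actual proof in \cite{MuellerPassenbrunner2017} without carrying out the estimate where it is genuinely delicate (accumulation of knots at bounded atom-distance with no a priori mesh separation). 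As written, the convergence of the polynomial coefficients $(c_j^{(n)})_n$ on $J_\ell$ is not established in exactly the case that matters.

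In short: your top-level logical reduction is right and matches the paper, but both directions you attempt to prove from scratch are the hard theorems the paper is citing, and your sketches do not close the gaps those theorems were written to close.
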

\begin{proof}
	If $|V|>0$, it follows from Theorem \ref{thm:splines}\eqref{it:splines4} that RNP
	implies $( (\mathcal F_n), k)$-MSCP for any positive integer $k$ and any
	interval filtration $(\mathcal F_n)$. The reverse implication for
	$|V|>0$ is a consequence of Theorem \ref{thm:general}. We even have that
	if $X$ does not have RNP, we can find a $(\mathcal F_n)$-adapted
	$k$-martingale spline sequence that does not converge at all points
	$t\in E$ for a subset $E\subset V$ with $|E|=|V|$. We simply have to
	choose $E:=\limsup E_n$ with $(E_n)$ being the sets from Theorem
	\ref{thm:general}.

	If $|V|=0$, it is proved in \cite{MuellerPassenbrunner2017} that any
	Banach space $X$ has $((\mathcal F_n),k)$-MSCP.
\end{proof}

We also have the following spline analogue of
  Theorem \ref{thm:rnp_dentable}:
	\begin{thm}\label{thm:rnpspline}
		For any positive integer $k$ and any $p\in (1,\infty)$,
		the following statements about a Banach space $X$ are
		equivalent:
		\begin{enumerate}[(i)]
			\item \label{it:rnp}$X$ has the Radon-Nikod\'{y}m property,
			\item \label{it:L1as}every $X$-valued $k$-martingale
				spline sequence bounded in
				$L^1_X$ converges almost surely,
			\item \label{it:unif}every uniformly integrable
				$X$-valued $k$-martingale spline sequence 
				converges almost surely and in
				$L^1_X$,
			\item \label{it:Lp}every $X$-valued $k$-martingale
				spline sequence bounded
				in $L^p_X$ converges almost surely and in
				$L^p_X$.
		\end{enumerate}
	\end{thm}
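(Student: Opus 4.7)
The plan is to deduce Theorem \ref{thm:rnpspline} from Theorem \ref{thm:char} together with standard measure-theoretic upgrades (Vitali's theorem, dominated convergence) and the maximal inequality Theorem \ref{thm:splines}\eqref{it:splines3}. I would establish the three equivalences (i) $\Leftrightarrow$ (ii), (i) $\Leftrightarrow$ (iii), (i) $\Leftrightarrow$ (iv) separately, exploiting the fact that (ii) already delivers an a.s.\ limit, after which the stronger mode of convergence follows routinely.

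For (i) $\Leftrightarrow$ (ii), the forward direction is Theorem \ref{thm:splines}\eqref{it:splines4}, while the reverse is obtained by specializing Theorem \ref{thm:char} to any interval filtration with $|V|>0$, for instance the dyadic filtration (whose endpoint set is dense in $[0,1]$, so $V=[0,1]$). For (i) $\Rightarrow$ (iii), a uniformly integrable sequence is $L^1_X$-bounded, so (ii) provides an a.s.\ limit $f\in L^1_X$; the vector-valued Vitali convergence theorem then yields $f_n \to f$ in $L^1_X$. For (i) $\Rightarrow$ (iv), an $L^p_X$-bounded sequence is $L^1_X$-bounded and therefore converges a.s.\ to some $f$ by (ii); Theorem \ref{thm:splines}\eqref{it:splines3} then places $\sup_n \|f_n\|_X$ in $L^p$, and since Fatou gives $\|f\|_X \leq \sup_n \|f_n\|_X$ a.e., one has the pointwise bound $\|f_n - f\|_X \leq 2\sup_n \|f_n\|_X \in L^p$, so dominated convergence delivers $L^p_X$-convergence.

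For the reverse implications (iii) $\Rightarrow$ (i) and (iv) $\Rightarrow$ (i), I would argue contrapositively. Assume $X$ fails RNP and fix an interval filtration with $|V|>0$, say the dyadic one. The proof of Theorem \ref{thm:char} (through Theorem \ref{thm:general}) produces a $k$-martingale spline sequence adapted to $(\mathcal F_n)$ that diverges on a set of positive measure. To contradict (iii) this sequence must be uniformly integrable, and to contradict (iv) it must be $L^p_X$-bounded; both follow at once if the construction yields an $L^\infty_X$-bounded sequence. This is the natural expectation, since by the preliminaries each splitting step expresses new B-spline coefficients as convex combinations of previous ones, and the B-spline basis is non-negative and partitions unity, so $\|f_n\|_{L^\infty_X}$ is controlled by a uniform constant times the supremum of the starting coefficient norms.

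The main obstacle, and the only point requiring genuine work, will be verifying this last claim: namely, that the construction referenced by Theorem \ref{thm:general} can be arranged to be $L^\infty_X$-bounded (or at minimum both uniformly integrable and $L^p_X$-bounded simultaneously). The forward implications are routine applications of Vitali/dominated convergence to the toolbox already provided by Theorem \ref{thm:splines}; all the genuinely spline-specific content of the equivalence sits in the explicit counterexample construction invoked through Theorem \ref{thm:char}.
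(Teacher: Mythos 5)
Your proposal is correct and takes essentially the same route as the paper: the forward implications are Theorem \ref{thm:splines}\eqref{it:splines4} together with Vitali, Fatou, and dominated convergence mediated by the maximal inequality \eqref{it:splines3}, and the reverse implications come from the counterexample in Theorem \ref{thm:general}. The paper simply arranges the implications as a single cycle (i)$\Rightarrow$(ii)$\Rightarrow$(iii)$\Rightarrow$(iv)$\Rightarrow$(i), whereas you establish each reverse arrow separately; that difference is purely organizational. One small point worth noting: the obstacle you flag at the end---whether the divergent sequence from Theorem \ref{thm:general} can be taken $L^\infty_X$-bounded---is not open at all: the statement of Theorem \ref{thm:general} explicitly produces an $L^\infty_X$-bounded $k$-martingale spline sequence, so the contrapositives of \eqref{it:unif} and \eqref{it:Lp} follow immediately from it without any further verification.
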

	\begin{proof}
		\eqref{it:rnp}$\Rightarrow$\eqref{it:L1as}: Theorem
		\ref{thm:splines}\eqref{it:splines4}. \\
		\eqref{it:L1as}$\Rightarrow$\eqref{it:unif}: clear. \\
		\eqref{it:unif}$\Rightarrow$\eqref{it:Lp}: 
		if $(f_n)$ is a $k$-martingale spline sequence bounded in $L^p_X$ for
		$p>1$, then $(f_n)$ is uniformly integrable, therefore it has a
		limit $f$ (a.s. and $L^1_X$), which, by Fatou's lemma, is also
		contained in $L^p_X$. 
		By Theorem \ref{thm:splines}(iii),  $\sup_n \|f_n\|_X\in L^p$ and 
		we can apply dominated convergence to obtain
		$\|f_n - f\|_{L^p_X}\to 0$. 

		\eqref{it:Lp}$\Rightarrow$\eqref{it:rnp}: follows from Theorem
		\ref{thm:general}.
	\end{proof}

The rest of the article is devoted to the construction of
a suitable non-RNP-valued $k$-martingale spline sequence, adapted to an arbitrary
given filtration $(\mathcal F_n)$, so that the associated martingale
spline differences are separated away from zero on a large set, which, more precisely, takes
the following form:
\begin{thm}\label{thm:general}
Let $X$ be a Banach space without RNP, $(\mathcal F_n)$ an interval filtration,
$V$ the set of all accumulation points of $\cup_n \Delta_n$ and $k$ a positive
integer.

Then,
there exists a positive number $\delta$ such that for all $\eta\in (0,1)$, there
exists an increasing sequence of positive
integers $(m_j)$, an $L_X^\infty$-bounded $k$-martingale spline sequence
$(f_j)_{j\geq 0}$ adapted to $(\mathcal F_{m_j})$ with $f_j \in
S_{m_j}^{(k)}\otimes X$, and a sequence $(E_n)$ of measurable sets
$E_n\subset V$ with $|E_n|\geq
(1-2^{-n}\eta)|V|$  so that for all $n\geq 1$
\begin{equation*}
	\|f_{n}(t) - f_{n-1}(t) \|_X \geq \delta,\qquad   t\in E_n.
\end{equation*}
\end{thm}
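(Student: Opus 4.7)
The plan is to encode a bounded $\delta$-tree in $X$---the standard witness of the failure of RNP, see \cite{DiestelUhl1977}---into the B-spline coefficients of a sequence of functions $f_j\in S_{m_j}^{(k)}\otimes X$. Since $X$ fails RNP, fix $M<\infty$, $\delta>0$, and a family $(x_\alpha)_{\alpha\in\{0,1\}^{<\omega}}\subset X$ with $\sup_\alpha\|x_\alpha\|_X\leq M$, $x_\alpha=\tfrac12(x_{\alpha 0}+x_{\alpha 1})$, and $\|x_{\alpha i}-x_\alpha\|_X\geq 2\delta$ for $i\in\{0,1\}$. The target is that for every $t$ in a ``good'' set $E_j$ the value $f_j(t)$ tracks a path $\alpha(t,j)\in\{0,1\}^j$ through the tree up to $X$-norm error $<\delta/2$, so that $\|f_j(t)-f_{j-1}(t)\|_X\geq 2\delta-2\cdot\tfrac{\delta}{2}=\delta$ on $E_j$.

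The geometry of $V$ drives the choice of the levels $m_j$. By the Lebesgue density theorem, almost every $t\in V$ is a density point of $V$; fix once and for all a subset $E_0\subset V$ of density points with $|E_0|\geq(1-\eta/2)|V|$ on which the density convergence is quantitatively uniform. Because $V$ consists of accumulation points of $\bigcup_n\Delta_n$, for each $t\in E_0$ and each $m$ there is an $n>m$ such that the $\mathcal F_n$-atom containing $t$ is subdivided at a finer level into two pieces of comparable Lebesgue measure, which is the spline analogue of dyadic branching. Given $f_{j-1}\in S_{m_{j-1}}^{(k)}\otimes X$ with B-spline coefficients $(c_i^{j-1})$ already tracking the tree, pick $m_j$ large enough that inside each $\mathcal F_{m_{j-1}}$-atom meeting $E_{j-1}$ this branching is realized; then define $f_j\in S_{m_j}^{(k)}\otimes X$ by new coefficients $(c_i^j)$ obtained by locally replacing each $c_i^{j-1}\approx x_\alpha$ by two values approximating $x_{\alpha 0}$ and $x_{\alpha 1}$, placed so that the appropriate convex combination returns $c_i^{j-1}$. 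The density choice of $E_0$ together with the $k$-locality of B-splines then ensures that the set $E_j\subset E_{j-1}$ on which the new coefficients genuinely give a jump of norm $\geq\delta$ satisfies $|E_{j-1}\setminus E_j|\leq 2^{-j}\eta|V|$.

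The main obstacle is the simultaneous fulfilment, at each inductive step, of the spline martingale equation $P_{m_{j-1}}^{(k)}f_j=f_{j-1}$, the uniform $L^\infty_X$-bound on $(f_j)$, and the pointwise lower bound $\|f_j-f_{j-1}\|_X\geq\delta$ on $E_j$. For $k=1$, $P_n^{(1)}$ is the conditional expectation on $\mathcal F_n$ and the construction reduces directly to the usual dyadic martingale embedding. For $k\geq 2$ the projection $P_{m_{j-1}}^{(k)}$ is non-local in $L^2$, so producing an $S_{m_j}^{(k)}\otimes X$-preimage of $f_{j-1}$ is not immediate; the way in is the B-spline refinement identity recalled in the Introduction, namely that each coarse coefficient $c_i^{j-1}$ is a \emph{convex combination} of a contiguous block of the refined coefficients $c_i^j$. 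This turns the projection equation into an explicit combinatorial system that is solvable precisely when those convex weights are uniformly bounded away from $0$ and $1$, which is the quantitative meaning of $t$ being an accumulation point in $V$. Since all new coefficients stay in $\co\{x_\alpha:\alpha\in\{0,1\}^{<\omega}\}$, the $L^\infty_X$-bound $M$ is automatically preserved throughout the induction.
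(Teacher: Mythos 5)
The central gap is your mechanism for satisfying the spline martingale equation $P_{m_{j-1}}^{(k)}f_j=f_{j-1}$. You propose to solve it via the B-spline refinement identity, but that identity relates the coefficients of a \emph{single} function $f\in S_{m}^{(k)}\subset S_{n}^{(k)}$ in the two bases (and in the opposite direction from what you write: the \emph{fine} coefficients are convex combinations of the \emph{coarse} ones, not vice versa). It says nothing about the $L^2$-orthogonal projection $P_{m_{j-1}}^{(k)}$, which for $k\geq 2$ is a genuinely non-local operator whose action also depends on the Gram matrix of the B-splines at the two mesh levels. Knowing how coarse coefficients expand into fine ones does not let you pick fine coefficients $(c_i^j)$ for a new function $f_j$ so that $P_{m_{j-1}}^{(k)}f_j$ comes out equal to $f_{j-1}$, and your claim that ``this turns the projection equation into an explicit combinatorial system'' is not substantiated. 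This is precisely the hard part of the problem and it is left unresolved.

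The paper sidesteps the issue entirely. Rather than prescribing $f_j$ and trying to verify the projection identity, it builds the difference $g=f_j-f_{j-1}$ so that $g$ is supported in the interior of a single atom $I$ of $\mathcal F_{m_{j-1}}$ and has vanishing moments $\int_I t^\ell g(t)\,\mathrm{d}t=0$ for $\ell=0,\dots,k-1$. Since every $s\in S_{m_{j-1}}^{(k)}$ restricts to a polynomial of order $k$ on $I$, this forces $\int g\,s=0$ for all such $s$, hence $P_{m_{j-1}}^{(k)}g=0$ automatically. Arranging those $k$ moment conditions to vanish while making $f_{j-1}+g$ take ``branching'' values on a large subset of $V\cap I$ is the content of Lemma \ref{lem:moments}; that lemma (the two-step construction with the Vandermonde/compensation argument on the reserved right-hand piece $R$) is the real work of the proof and your sketch does not address it. Two secondary divergences worth noting: (i) the paper does not invoke the Lebesgue density theorem; it uses Lemma \ref{lem:Gamma} to cut $I$ into pieces of equal $V$-measure most of which have small diameter, which is the quantitative form actually needed. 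Your assertion that near each density point the atoms of some $\mathcal F_n$ ``subdivide into two pieces of comparable measure'' is not justified for an arbitrary interval filtration; all one knows is that there are infinitely many breakpoints near a point of $V$, and the paper uses only that. (ii) The paper works with a non-dentable convex set $D$ and the self-similarity $x\in\conv(D\setminus B(x,\delta))$ rather than a dyadic $\delta$-tree. These witnesses of non-RNP are equivalent, but the non-dentable set hands you arbitrary finite convex combinations, which meshes much more naturally with the partition-of-unity structure of B-splines than forcing binary splits as you do.
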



We will use the concept of dentable sets to prove  Theorem \ref{thm:general} and
recall its definition:
	\begin{defin}\label{def:dentable}
	Let $X$ be a Banach space. A subset $D\subset X$ is called
	\emph{dentable} if for any $\varepsilon>0$ there is a point $x\in D$
	such that
	\begin{equation*}
		x\notin \overline{\operatorname{conv}}\big(D\setminus
		B(x,\varepsilon)\big),
	\end{equation*}
	where $\overline{\operatorname{conv}}$ denotes the closure of the convex hull and where
	$B(x,\varepsilon)= \{y\in X : \|y-x\| < \varepsilon\}$.
\end{defin}
\begin{rem}[{cf. \cite[p. 138, Theorem 10]{DiestelUhl1977} and \cite[p. 49, Lemma
	2.7]{Pisier2016}}]
	If $D$ is a bounded non-dentable set, then, 
	the closed convex hull $\overline{\operatorname{conv}}(D)$ is also
	bounded and non-dentable. Thus, we may assume that $D$ is convex.
	Moreover, we can as well assume that each $x\in
	D$ can be expressed as a finite convex combination of elements in
	$D\setminus B(x,\delta)$ for some $\delta>0$ since if 
	$D\subset X$ is a convex set such that
		$x\in\overline{\operatorname{conv}}\big(D \setminus
		B(x,\delta)\big)$ for all  $x\in D$,
	then, the enlarged set $\widetilde{D} = D + B(0,\eta)$ is also convex
	and
	satisfies
	\begin{equation*}
		x\in\operatorname{conv}\big(\widetilde{D} \setminus
		B(x,\delta-\eta)\big),\qquad x\in\widetilde{D}.
	\end{equation*}
\end{rem}

The reason why we are able to use the concept of dentability in the proof of
Theorem \ref{thm:general} is the following
geometric characterization of the RNP (see for instance \cite[p. 136]{DiestelUhl1977}).
\begin{thm}\label{thm:dentable}
	For any Banach space $X$ we have that $X$ has the RNP if and only if every bounded subset
	of $X$ is dentable.
\end{thm}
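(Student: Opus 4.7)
The plan is to build $(f_j)$ inductively, imitating the classical martingale construction of a non-convergent $X$-valued martingale from a non-dentable set, but with conditional expectations replaced by the spline projections $P_n^{(k)}$ and with the splitting of martingale values into convex combinations replaced by the convex-combination refinement property of B-spline coefficients recalled in the excerpt. As preparation, I would apply Theorem~\ref{thm:dentable} together with the subsequent remark to fix a bounded, convex subset $D\subset X$ and a constant $\delta_0>0$ such that every $x\in D$ admits a finite convex representation $x=\sum_l\lambda_l y_l$ with $y_l\in D$ and $\|y_l-x\|_X\geq 2\delta_0$. Set $\delta:=\delta_0$, choose $x_0\in D$, and put $f_0\equiv x_0$ and $m_0:=0$; this is a valid element of $S_n^{(k)}\otimes X$ for every $n$ because B-splines form a partition of unity.

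For the inductive step, assume $f_{j-1}\in S_{m_{j-1}}^{(k)}\otimes X$ has all its B-spline coefficients (in any finer basis) lying in $D$. I would choose $m_j\gg m_{j-1}$ so that $\mathcal F_{m_j}$ refines $\mathcal F_{m_{j-1}}$ very finely near $V$, in a quantitative sense to be determined. Re-expanding $f_{j-1}$ in the B-spline basis of $S_{m_j}^{(k)}$ yields new coefficients $(b_i)\subset D$, and for each $b_i$ I would fix a decomposition $b_i=\sum_l\mu_{il}w_{il}$ with $w_{il}\in D\setminus B(b_i,2\delta_0)$. Distributing the points $w_{il}$ across consecutive B-spline indices inside each atom of $\mathcal F_{m_{j-1}}$, in proportion to the weights $\mu_{il}$, defines a function $g_j\in S_{m_j}^{(k)}\otimes X$ whose B-spline coefficients lie in $D$ and which satisfies $\|g_j(t)-f_{j-1}(t)\|_X\geq 2\delta_0$ for all $t$ outside a thin transition layer of width roughly $k$ fine atoms near block boundaries (a set of arbitrarily small measure once $m_j$ is large enough). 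I would then define
\[
f_j := f_{j-1} + \bigl(g_j - P_{m_{j-1}}^{(k)} g_j\bigr),
\]
which lies in $S_{m_j}^{(k)}\otimes X$ and satisfies $P_{m_{j-1}}^{(k)}f_j=f_{j-1}$, so that $(f_j)$ is automatically a $k$-martingale spline sequence.

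The main obstacle is the simultaneous control of three quantities: the uniform $L^\infty_X$-bound on $(f_j)$, the pointwise lower bound $\|f_j-f_{j-1}\|_X\geq\delta$ on a large subset $E_j\subset V$, and the measure estimate $|E_j|\geq(1-2^{-j}\eta)|V|$. Since
\[
f_j-f_{j-1}=(g_j-f_{j-1})-P_{m_{j-1}}^{(k)}(g_j-f_{j-1}),
\]
both the lower bound and the $L^\infty_X$-bound reduce to showing that $P_{m_{j-1}}^{(k)}(g_j-f_{j-1})$ has $X$-norm at most $\delta_0$ on a $(1-2^{-j}\eta)$-portion of $V$ and is uniformly bounded everywhere. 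This is the heart of the argument: by construction $g_j-f_{j-1}$ is a sum of local perturbations whose fine B-spline coefficients are mean-zero with respect to the convex weights $\mu_{il}$ inside each atom of $\mathcal F_{m_{j-1}}$, while $P_{m_{j-1}}^{(k)}$ admits a dual B-spline representation with exponential off-diagonal decay, by Shadrin's theorem and its refinements in \cite{Shadrin2001, PassenbrunnerShadrin2014}. Quantifying this decay makes $P_{m_{j-1}}^{(k)}(g_j-f_{j-1})$ uniformly small outside a thin neighborhood of $\Delta_{m_{j-1}}$, and the defining property of $V$ as the accumulation set of $\cup_n\Delta_n$ guarantees that this neighborhood intersects $V$ in arbitrarily small measure once $m_j$ is taken sufficiently large; the uniform $L^\infty_X$-bound then follows from $\|g_j\|_{L^\infty_X}\leq\sup_{y\in D}\|y\|_X$ together with the $L^\infty$-boundedness of $P_{m_{j-1}}^{(k)}$ inherited from Theorem~\ref{thm:splines}\eqref{it:splines1} and the same exponential decay. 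Taking $E_n$ as the intersection within $V$ of the good sets arising at steps $1,\ldots,n$ then completes the construction.
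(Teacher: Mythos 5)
Your proposal is not a proof of the stated result. Theorem~\ref{thm:dentable} is the classical geometric characterization of RNP via dentability (due to Rieffel, Maynard, Davis--Phelps and Huff), which the paper quotes from \cite[p.~136]{DiestelUhl1977} without proof. What you have written is in fact an attempted proof of Theorem~\ref{thm:general}, the construction of a non-convergent $k$-martingale spline sequence, and it \emph{uses} Theorem~\ref{thm:dentable} and the remark after Definition~\ref{def:dentable} as inputs. So as a proof of Theorem~\ref{thm:dentable} the proposal is circular and off-target.

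Reading your argument charitably as an attempt at Theorem~\ref{thm:general}, there is a genuine gap in the inductive step. You define $f_j = f_{j-1} + (g_j - P_{m_{j-1}}^{(k)} g_j)$, so the martingale identity $P_{m_{j-1}}^{(k)} f_j = f_{j-1}$ is automatic, but everything then hinges on showing that $P_{m_{j-1}}^{(k)}(g_j-f_{j-1})$ is uniformly small on a large portion of $V$. Your justification is that the fine B-spline coefficients of $g_j-f_{j-1}$ are ``mean-zero with respect to the convex weights $\mu_{il}$'' and that the dual B-spline representation of $P_{m_{j-1}}^{(k)}$ has exponential off-diagonal decay. This is not sufficient. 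Orthogonal projection onto $S_{m_{j-1}}^{(k)}$ for $k\ge 2$ annihilates a locally supported perturbation only when all moments of order $\le k-1$ of that perturbation vanish on the corresponding atom, not when a weighted average of its B-spline coefficients vanishes (and the weights $\mu_{il}$ coming from dentability have no relation to the measure-theoretic weights in any case). Exponential decay of dual B-splines controls how far the projected image spreads, not its size near the perturbation, so it does not give the required smallness either. The paper avoids this entirely: Lemma~\ref{lem:moments} constructs $g$ with $\supp g\subset\operatorname{int} I$ \emph{and} $\int_I t^j g(t)\,\mathrm{d}t=0$ for $j=0,\dots,k-1$, so that $\int g\cdot s=0$ for every $s\in S_{m_n}^{(k)}$ and hence $P_{m_n}^{(k)}g=0$ exactly; the delicate Step~2 of that proof (solving a Vandermonde-type system to kill the higher moments) is precisely the point where your sketch has nothing comparable.
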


We record the following (special case of the) basic composition formula for determinants (see
for instance \cite[p. 17]{Karlin1968}):
\begin{lem}
	\label{lem:comp}
	Let $(f_i)_{i=1}^n$ and $(g_j)_{j=1}^n$ two sequences of functions in
	$L^2$. Then,
	\begin{align*}
		\det \Big( \int_0^1 f_i(t) &g_j(t)\dif t \Big)_{i,j=1}^n  \\
		&=\int_{0\leq t_1 < \cdots < t_n\leq 1} \det ( f_i(t_\ell)
		)_{i,\ell=1}^n \cdot \det ( g_j(t_\ell))_{j,\ell=1}^n\dif
		(t_1,\ldots,t_n).
	\end{align*}
\end{lem}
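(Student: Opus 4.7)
The plan is to prove the identity by expanding both determinants via the Leibniz formula and then exploiting the antisymmetry of each determinant viewed as a function of $(t_1,\ldots,t_n)$.

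First I would set
\begin{equation*}
    F(t_1,\ldots,t_n) := \det(f_i(t_\ell))_{i,\ell=1}^n, \qquad G(t_1,\ldots,t_n) := \det(g_j(t_\ell))_{j,\ell=1}^n,
\end{equation*}
and observe that each is antisymmetric under transpositions of two of its arguments (swapping $t_\ell$ and $t_m$ swaps two columns of the matrix). Consequently the product $F \cdot G$ is symmetric in $(t_1,\ldots,t_n)$, and since the simplex $\{0\le t_1<\cdots<t_n\le 1\}$ tiles $[0,1]^n$ up to a null set under the action of the symmetric group $S_n$,
\begin{equation*}
    \int_{[0,1]^n} F(t_1,\ldots,t_n)\, G(t_1,\ldots,t_n) \dif(t_1,\ldots,t_n) = n!\int_{0\le t_1<\cdots<t_n\le 1} F\cdot G \dif(t_1,\ldots,t_n).
\end{equation*}

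Next, I would compute the left-hand side of this identity directly. Writing $F = \sum_\sigma \sgn(\sigma)\prod_i f_{\sigma(i)}(t_i)$ and $G = \sum_\pi \sgn(\pi)\prod_i g_{\pi(i)}(t_i)$ and applying Fubini,
\begin{equation*}
    \int_{[0,1]^n} F\cdot G \dif(t_1,\ldots,t_n) = \sum_{\sigma,\pi\in S_n} \sgn(\sigma)\sgn(\pi) \prod_{i=1}^n M_{\sigma(i),\pi(i)},
\end{equation*}
where $M_{ij} := \int_0^1 f_i(t) g_j(t)\dif t$. Reparametrizing by $\tau := \pi\sigma^{-1}$, one has $\sgn(\sigma)\sgn(\pi) = \sgn(\tau)$ and $\prod_i M_{\sigma(i),\pi(i)} = \prod_j M_{j,\tau(j)}$, so that for each fixed $\tau$ the sum over $\sigma$ contributes the same term $n!$ times. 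This yields
\begin{equation*}
    \int_{[0,1]^n} F\cdot G \dif(t_1,\ldots,t_n) = n!\sum_{\tau\in S_n} \sgn(\tau)\prod_{j=1}^n M_{j,\tau(j)} = n!\det(M).
\end{equation*}

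Comparing the two expressions for $\int_{[0,1]^n} FG$ and cancelling the factor $n!$ gives exactly the claimed identity. There is no real obstacle here; the whole content is the pairing of the Leibniz expansion with a change of variables on the cube, and the only thing requiring care is the sign bookkeeping in the reparametrization $\tau=\pi\sigma^{-1}$, which is standard.
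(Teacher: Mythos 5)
Your proof is correct. The paper itself does not prove Lemma \ref{lem:comp}; it simply records it as a special case of the basic composition formula for determinants and cites Karlin's \emph{Total Positivity} (1968, p.~17). Your argument is the standard symmetrization (Andr\'eief/Gram) proof and is essentially the one found in that reference: both determinants are alternating in $(t_1,\ldots,t_n)$, so their product is symmetric and the cube integral is $n!$ times the simplex integral; expanding both determinants by Leibniz, applying Fubini (justified since each $f_i g_j \in L^1$ by Cauchy--Schwarz, so each summand is an integrable product of functions of separate variables), and reindexing by $\tau=\pi\sigma^{-1}$ collapses the double sum to $n!\det(M)$. The sign bookkeeping $\sgn(\sigma)\sgn(\pi)=\sgn(\tau)$ and the identity $\prod_i M_{\sigma(i),\pi(i)}=\prod_j M_{j,\tau(j)}$ under $j=\sigma(i)$ are both handled correctly. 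No gap.
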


We also note the following simple
\begin{lem}
	\label{lem:Gamma}
	Let $I\subset [0,1]$ be an interval and $V$ an arbitrary measurable subset of
	$[0,1]$. 
	Then, for all $\varepsilon_1,\varepsilon_2>0$, there exists a positive
	integer $n$ so that for the decomposition of $I$ into intervals
	$(A_\ell)_{\ell=1}^n$ with $\sup A_\ell \leq \inf A_{\ell+1}$ and
	$n|A_\ell\cap V| = |I\cap V|$ for all $\ell$, the index set 
	$\Gamma = \{ 2 \leq \ell \leq n-1 : \max(|A_{\ell-1}|,
	|A_{\ell}|, |A_{\ell+1}|) \leq \varepsilon_1 \}$
	satisfies 
	\begin{equation*}
	\sum_{\ell\in \Gamma} |A_{\ell} \cap V| \geq (1-\varepsilon_2)
	|I\cap V|.
	\end{equation*}
\end{lem}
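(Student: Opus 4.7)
The plan is to reduce this to an elementary counting argument once the partition is shown to exist. First, if $|I\cap V| = 0$, the normalization $n|A_\ell \cap V| = |I \cap V|$ is automatic for any partition of $I$ into $n$ consecutive subintervals and the conclusion reduces to $0 \geq 0$, so I may assume $|I \cap V| > 0$. Then the function $x \mapsto |[\inf I, x] \cap V|$ is continuous and non-decreasing from $0$ to $|I\cap V|$, so applying the intermediate value theorem at the levels $j|I\cap V|/n$, $j=0,1,\ldots,n$, produces the desired partition $(A_\ell)_{\ell=1}^n$ for every positive integer $n$.

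With the partition in hand, I would call an index $\ell$ \emph{fat} if $|A_\ell| > \varepsilon_1$. Because the $A_\ell$ are pairwise disjoint subintervals of $[0,1]$, we have $\sum_\ell |A_\ell| = |I| \leq 1$, so the number of fat indices is at most $\varepsilon_1^{-1}$. An index $\ell \in \{2,\ldots,n-1\}$ is excluded from $\Gamma$ precisely when at least one of $\ell-1,\ell,\ell+1$ is fat, which rules out at most $3\varepsilon_1^{-1}$ values of $\ell$; including the two boundary indices $\ell = 1$ and $\ell = n$ (which are automatically outside $\Gamma$), at most $3\varepsilon_1^{-1} + 2$ indices lie outside $\Gamma$. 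Since each $A_\ell$ carries exactly $|I \cap V|/n$ of the $V$-mass by construction, the $V$-mass of $\{1,\ldots,n\}\setminus\Gamma$ is at most $(3\varepsilon_1^{-1}+2)|I\cap V|/n$.

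Finally I would choose $n$ large enough that $(3\varepsilon_1^{-1} + 2)/n \leq \varepsilon_2$, which yields $\sum_{\ell \in \Gamma} |A_\ell \cap V| \geq (1-\varepsilon_2)|I\cap V|$ as required. There is essentially no obstacle: the argument is just pigeonhole on the total Lebesgue length, and the only care needed is to include both the three-neighbor fatness condition and the two boundary indices when estimating the size of the complement of $\Gamma$.
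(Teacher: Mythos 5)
Your proof is correct. The paper states Lemma~\ref{lem:Gamma} without proof (introducing it only as ``the following simple'' lemma), so there is no argument on record to compare against; your reduction --- build the equal-$V$-mass partition via the intermediate value theorem on $x\mapsto|[\inf I,x]\cap V|$, bound the number of ``fat'' intervals by $\varepsilon_1^{-1}$ using $\sum_\ell|A_\ell|=|I|\leq 1$, observe each fat index can disqualify at most three indices plus the two endpoints $\ell=1,n$, and then take $n\geq(3\varepsilon_1^{-1}+2)/\varepsilon_2$ so that the excluded $V$-mass $(3\varepsilon_1^{-1}+2)|I\cap V|/n$ is at most $\varepsilon_2|I\cap V|$ --- is precisely the elementary pigeonhole argument the author evidently had in mind, and the degenerate case $|I\cap V|=0$ is handled cleanly.
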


\section{Construction of non-convergent spline sequences}
In this section, we prove Theorem \ref{thm:general}.
In order to do that, we begin by fixing an interval filtration $(\mathcal F_n)$, the
corresponding endpoints of atoms $(\Delta_n)$ and a positive integer $k$.
For the space $S_n^{(k)}$, we will suppress the (fixed) index $k$ and write
$S_n$ instead. We will apply the same convention to the corresponding projection
operators $P_n = P_n^{(k)}$.
We also let $V\subset[0,1]$ be the closed set of all accumulation 
points of  $\cup_n \Delta_n$. 

The main step in the proof of Theorem \ref{thm:general} consists of 
an inductive application of the construction of a suitable martingale spline
difference
in the following lemma:

\begin{lem}\label{lem:moments}
	Let  $(x_j)_{j=1}^M$ be in the Banach space $X$, $\bar x\in
	S_N\otimes X$ for some non-negative integer $N$ such that $\bar x =
	\sum_{j=1}^M \alpha_j\otimes x_j$ with
	$\sum_{j=1}^M
	\alpha_j \equiv 1$, $\|x_j\|\leq 1$, $\alpha_j\in S_N$
	having non-negative B-spline coefficients for all $j$ and let $I\subset [0,1]$ be an
	interval so that $|I\cap V| >0$. 
	
	Then, for
	all $\varepsilon\in (0,1)$, there
	exists a positive integer $K$ and a function $g\in S_K\otimes X$ with the properties
	\begin{enumerate}[(i)]
		\item \label{eq:lem2-1}$\int_I t^j g(t)\dif t = 0$ for all $j=0,\ldots,k-1$,
		\item \label{eq:lem2-2}$\supp g\subset \operatorname{int} I$,
		
		\item \label{eq:lem2-3}we have a  splitting of the collection $\mathscr A =
			\{A\subset I: A \text{ is atom in }\mathcal F_K\}$ into
			$\mathscr A_1 \cup \mathscr A_2$  so that  
			\begin{enumerate}[(a)]
				\item 
if the functions $\alpha_j$ are all constant, then \\
on each $J\in \mathscr A_1$, $\bar x + g$
					is constant with a value in
					$\cup_i \{ x_i \}$,
					
					otherwise we still have that \\
					on each $J\in\mathscr A_1$, $\bar{x} + g$
			is constant with a value in
			$\operatorname{conv}\{x_i :
			1\leq i\leq M\}$,

				\item $|\cup_{J\in\mathscr A_1} J\cap V| \geq
					(1-\varepsilon)|I\cap V|$,
				\item on each $J\in \mathscr A_2$,
					$\bar x + g = \sum_\ell
					\lambda_\ell\otimes
					y_\ell$ for some
					functions $\lambda_\ell \in
					S_K$ having non-negative B-spline
					coefficients with $\sum_\ell
					\lambda_\ell\equiv 1$ and
					$y_\ell\in
					\operatorname{conv}\{x_j: 1\leq
					j\leq M\} +
					B(0,\varepsilon)$.
			\end{enumerate}
	\end{enumerate}
\end{lem}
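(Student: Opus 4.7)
The plan is to construct $g$ by prescribing the B-spline coefficients of $\bar x + g$ in a sufficiently refined spline space $S_K\supset S_N$. The key geometric fact is that if $k$ consecutive B-spline coefficients of an element of $S_K \otimes X$ all equal some vector $v$, then the function takes the constant value $v$ on the corresponding interior atom of $\mathcal{F}_K$; while a change of value between two neighbouring constant ``blocks'' costs exactly $k-1$ atoms of $\mathcal{F}_K$, on which, by the partition-of-unity property of B-splines, the function is automatically a B-spline convex combination of the two adjacent constants. Conditions (iii)(a) and (iii)(c) will therefore be handled by organising the B-spline coefficients of $\bar x + g$ into blocks of length at least $k$ with a common value, separated by one-step changes.

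For the setup, I would apply Lemma~\ref{lem:Gamma} with parameters $\varepsilon_1,\varepsilon_2$ chosen small (depending on $k$, $\varepsilon$, and a bound on the derivatives of $\bar x$) and then refine the filtration to some $\mathcal{F}_K$ whose atoms in $I$ are precisely the resulting intervals $A_1,\ldots,A_n$, so that the interior set $\Gamma$ carries at least $(1-\varepsilon_2)|I\cap V|$ of the $V$-mass and consists of very short atoms. Inside the $\Gamma$-zone I would pick consecutive macro-blocks of length $L\geq k$ separated by $k-1$ transition atoms, and assign each block a target value $v_b$: in case the $\alpha_j$'s are all constant on $I$ so that $\bar x=\sum_j\lambda_j x_j$, each $v_b\in\{x_1,\dots,x_M\}$ is chosen by a deterministic pattern whose empirical distribution (weighted by position) matches $(\lambda_j)$ closely; in the general case, $v_b=\bar{x}(t_b^\ast)\in\conv\{x_j\}$ for some $t_b^\ast$ in the $b$-th block, and the identity $\bar{x}=\sum_j\alpha_j x_j$ shows this lies in $\conv\{x_j\}$. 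The B-spline coefficients of $\bar x+g$ in $S_K$ are then set equal to $v_b$ on B-splines fully supported inside macro-block $b$, and equal to the original (refined) B-spline coefficients of $\bar x$ on a strip of $k-1$ atoms adjacent to $\partial I$, automatically forcing $\supp g\subset\operatorname{int} I$.

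The main obstacle is the $k$-dimensional moment condition (i). The preliminary $g$ constructed above has moment residuals $m_j=\int_I t^j g(t)\dif t\in X$, $j=0,\dots,k-1$, whose norms are controlled by the widths of the atoms in $\Gamma$ and, in case~(a), by the discrepancy of the matching between the pattern $(v_b)$ and the weights $(\lambda_j)$; both can be made arbitrarily small by choosing $n$ (and hence the number of blocks) sufficiently large. To zero the residuals out exactly I would reserve $k$ consecutive B-spline indices within one macro-block and perturb their coefficients from $v_b$ to $v_b+w_i$ for unknowns $w_1,\ldots,w_k\in X$. The resulting $k\times k$ linear system $\sum_i w_i\int_0^1 t^j N_i(t)\dif t=-m_j$ has a coefficient matrix that is non-singular by the basic composition formula (Lemma~\ref{lem:comp}) applied to the linearly independent $k$ B-splines together with $\{1,t,\dots,t^{k-1}\}$; it therefore admits a unique solution with $\max_i\|w_i\|\leq C_k\max_j\|m_j\|<\varepsilon$ for $n$ large. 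The perturbed atoms absorb into $\mathscr A_2$ with B-spline coefficients in $\conv\{x_j\}+B(0,\varepsilon)$, and the $V$-mass bound (iii)(b) follows from Lemma~\ref{lem:Gamma} after picking $\varepsilon_2$ so that $\varepsilon_2$ plus the relative $V$-mass of the transition and boundary atoms is at most $\varepsilon$.
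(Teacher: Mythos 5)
Your overall architecture matches the paper's --- a two-stage construction (approximate version, then a moment correction via a $k\times k$ linear system solved through Lemma~\ref{lem:comp}), with Lemma~\ref{lem:Gamma} delivering the $V$-mass bound --- but there is a genuine gap in where you place the correcting B-splines, and it is not a small one.

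You put the $k$ correcting B-splines \emph{inside a macro-block in the $\Gamma$-zone}, i.e.\ in the region where the atoms of $\mathcal F_K$ have been made arbitrarily small (width $\leq\varepsilon_1$). Each of these B-splines then has support of width $O(k\varepsilon_1)$, so the rows of the moment matrix $A=(\int t^j N_i\,dt)$ are nearly proportional (a Taylor expansion around the common center shows $A$ differs from a rank-one matrix by terms of order $\varepsilon_1,\varepsilon_1^2,\dots,\varepsilon_1^{k-1}$ in successive rows). Consequently $\|A^{-1}\|$ blows up roughly like $\varepsilon_1^{-k}$, and the bound $\max_i\|w_i\|\leq C_k\max_j\|m_j\|$ you assert, with $C_k$ depending on $k$ alone, is simply false: the constant is $\|A^{-1}\|_\infty$, which depends on the atom geometry and is unbounded as $\varepsilon_1\to 0$. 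Your claim that the residuals $\|m_j\|$ are ``controlled by the widths of the atoms,'' i.e.\ $O(\varepsilon_1)$, cannot compensate a $\varepsilon_1^{-k}$ blow-up. The argument is circular: you need to refine to drive the residuals down, but refining worsens the matrix you use to kill them.

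The paper's Step~2 resolves exactly this by decoupling the two regions. It first carves off a subinterval $R\subset I$ with $|R\cap V|=\tilde\varepsilon|I\cap V|$, chooses the $k$ correcting B-splines (with disjoint supports) inside $R$, and thereby \emph{fixes} $A$ and $\|A^{-1}\|_\infty$ once and for all. Only afterwards is the parameter $\varepsilon_1$ chosen --- explicitly as $\varepsilon_1 = \tilde\varepsilon/(k(1+\tilde\varepsilon)\|A^{-1}\|_\infty|L|)$, a quantity that depends on the already-fixed $A$ --- and the Step~1 construction applied on the small intervals decomposing $L=I\setminus R$. Because each $g_\ell$ has exact mean zero on $A_\ell$ (a structural feature of the Step~1 stopping-time construction, not an approximate matching), the residuals can be rewritten as $\int_{A_\ell}(t^j-(\inf A_\ell)^j)g_\ell(t)\,dt$ and bounded by $j\varepsilon_1|A_\ell|$; summing gives $\|m_j\|\leq\tilde\varepsilon\|A^{-1}\|_\infty^{-1}$, which is exactly the bound you need. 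Your ``empirical distribution matches $(\lambda_j)$ closely'' only gives approximate mean zero, and without the exact mean-zero-on-subintervals structure the residuals do not admit this clean localized bound either.

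Two secondary points. First, you cannot ``refine the filtration to some $\mathcal F_K$ whose atoms in $I$ are precisely $A_1,\dots,A_n$'': the filtration is given, and the intervals produced by Lemma~\ref{lem:Gamma} are generally not atoms of any $\mathcal F_K$; you can only choose $K$ large so that each $A_\ell$ with $\ell\in\Gamma$ \emph{contains} many atoms. Second, Lemma~\ref{lem:comp} does still give non-singularity for $k$ consecutive (overlapping) B-splines via Schoenberg--Whitney, so that part of your argument is fine, but the paper's choice of disjoint supports in $R$ makes the determinant computation immediate because the collocation matrix becomes diagonal.
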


\begin{proof}
	The first step of the construction gives a function $g$ satisfying the
	desired conditions but only having mean zero instead of vanishing moments in
	property \eqref{eq:lem2-1}.
	In the second step, we use this result to construct a function $g$ whose
	moments also vanish.

	\textsc{Step 1:}
			%
	We start with the (simpler) construction of $g$ when the functions
	$\alpha_j$ are not constant and condition \eqref{eq:lem2-3}(a) has the
	form that  on each $J\in\mathscr A_1$, $\bar x + g$ is constant with a
	value in
	$\operatorname{conv}\{x_i : 1\leq i\leq M\}$.
	
	First, decompose $I$ into intervals $(A_\ell)_{\ell=1}^n$ 
	satisfying $n |A_\ell\cap V| = |I\cap V|$ with $\sup A_\ell \leq \inf
	A_{\ell+1}$ and $n\geq 4/\varepsilon$.
	Then, choose $K\geq N$ so large that $A_1,A_2,A_{n-1},A_n$ each
	contains at least $k+1$ atoms of
	$\mathcal F_K$. Denoting by $(N_j)$ the B-spline basis of $S_K$, we
	can write
	\begin{equation*}
		\alpha_\ell \equiv \sum_j \alpha_{\ell,j} N_j, \qquad
		\ell=1,\ldots,M
	\end{equation*}
	for some non-negative coefficients $(\alpha_{\ell,j})$. Define
	\begin{equation*}
		h_\ell \equiv \sum_{j : \cup_{i=2}^{n-1} A_i \cap \supp N_j\neq
	\emptyset} \alpha_{\ell,j} N_j.
	\end{equation*}
	Observe that $\supp h_\ell \subset \operatorname{int} I$ and
	$h_\ell \equiv \alpha_\ell$ on $\cup_{i=2}^{n-1} A_i$. Letting
		$\widetilde{x} = \sum \beta_\ell x_\ell$
	for 
	$	\beta_\ell = \int h_\ell/\big(\sum_j \int
			h_j\big) \in [0,1],$
	we define
	\begin{equation*}
		g := - \sum_{\ell=1}^M h_\ell \otimes x_\ell +
		\Big(\sum_{j=1}^M
		h_j \Big) \otimes \widetilde{x}.
	\end{equation*}
	This is a function of the desired form
	when defining $\mathscr{A}_1 := \{A \subset \cup_{i=2}^{n-1} A_i :
	A\text{ is atom in }\mathcal F_K\}$ and $\mathscr{A}_2 =
	\mathscr{A}\setminus \mathscr{A}_1$ as we will now show by proving
	$\int g=0$  and properties \eqref{eq:lem2-2},\eqref{eq:lem2-3}.
	The fact that $\int g=0$ follows from a simple calculation. Property \eqref{eq:lem2-2} is
	satisfied by the definition of the functions $h_\ell$. Property
	\eqref{eq:lem2-3}(a) follows 
	from the fact
	that $\bar x(t) + g(t) = \tilde{x}\in\operatorname{conv}\{x_j : 1\leq
	j\leq M\}$ for $t\in \cup_{i=2}^{n-1} A_i$ since $h_\ell
	\equiv \alpha_\ell$ on that set for any $\ell=1,\ldots, M$.
	Since $|(A_1\cup A_2\cup A_{n-1}\cup A_n)\cap V|= 4|I\cap V|/n \leq  \varepsilon|I\cap V|$,
	\eqref{eq:lem2-3}(b) also follows from the construction of $\mathscr
	A_1$. Since
	\begin{equation*}
		 \bar x(t)+g(t) = \sum_{\ell=1}^M \big(\alpha_\ell(t) -
		 h_\ell(t)\big) x_\ell +
		\Big(\sum_{j=1}^M h_j(t)\Big) \tilde{x},
	\end{equation*}
	$\tilde{x}\in \operatorname{conv}\{x_j : 1\leq j\leq M\}$, $h_\ell \leq
	\alpha_\ell$ and $\sum_\ell
	\alpha_\ell \equiv 1$, \eqref{eq:lem2-3}(c) is also proved.

	The next step is to construct the desired function $g$ when $\alpha_j$
	are assumed to be constant and \eqref{eq:lem2-3}(a) has the form
that on each $J\in\mathscr A_1$, $\bar x + g$
					is constant with a value in
					$\cup_i \{ x_i \}$.
	Here, the idea is to construct a function of the form
	$g(t) = \sum f_j(t) (x_j - \bar x)$ with $f_j\in S_K$ for some $K$ and $\int f_j \simeq C\alpha_j$
	for all $j$
	and some constant $C$ independent of $j$ to employ the assumption $\sum
	\alpha_j (x_j - \bar x)=0$ implying $\int g = 0$.

	We begin this construction 
	by successively choosing parameters $\varepsilon_3 \ll
	\varepsilon_1 \ll \tilde{\varepsilon}<\varepsilon$
	obeying certain given conditions depending on $\varepsilon$, $\bar x$, 
	$(x_j)$, $(\alpha_j)$, $|I\cap V|$ and $|I|$.

First, set  
		$\tilde{\varepsilon} = \varepsilon |I\cap V|/(3|I|)>0$.
	and
	\begin{equation}
		\label{eq:eps1}
		\varepsilon_1 = \frac{\varepsilon\tilde{\varepsilon}(1-\varepsilon/3) | I\cap V|}{72 M}.
	\end{equation}
	Now, we apply Lemma \ref{lem:Gamma} with the parameters $\varepsilon_1$
	and $\varepsilon_2=\varepsilon/3$ to get a positive integer $n$  and a
	partition
	$(A_{\ell})_{\ell=1}^{n}$ of $I$ consisting of intervals with
	$n|A_\ell\cap V|=|I\cap V|$ for all $\ell=1,\ldots,n$ so that  
	\begin{equation*}
		\Gamma = \{ 2 \leq \ell \leq n-1 : \max(|A_{\ell-1}|,
		|A_{\ell}|, |A_{\ell+1}|) \leq \varepsilon_1 \}
	\end{equation*}
	satisfies
	\begin{equation}\label{eq:applemma}
		\big( 1 - \frac{\varepsilon}{3}\big)|I\cap V|\leq \sum_{\ell\in \Gamma} |A_{\ell} \cap V|.
	\end{equation}
	Finally, we put $\varepsilon_3= \varepsilon_1/(2n)$.

	
	Next, for each $\ell=1,\ldots,n$, we choose a point
	$p_{\ell}\in \operatorname{int} A_{\ell}$ and an integer $K_\ell$ so
	that the intersection of $\operatorname{int} A_{\ell}$ and the $\varepsilon_3$-neighborhood
	$B(p_\ell,\varepsilon_3)$ of $p_{\ell}$ contains at least $k+1$ atoms of
	$\mathcal F_{K_\ell}$
	to the left as well as to the right of $p_{\ell}$. This is possible
	since $|A_\ell\cap V|=|I\cap V|/n$ and $V$ is the set of all
	accumulation points of $\cup_j \Delta_j$. Then set 
	$K=\max_\ell K_\ell$ and we let $u_\ell\in A_{\ell}$
	be the leftmost point of $\Delta_K$ contained in $B(p_\ell,\varepsilon_3)\cap
	\operatorname{int} A_{\ell}$. Similarly, let  $v_\ell\in A_{\ell}$
	be the rightmost point of $\Delta_K$ contained in $B(p_\ell,\varepsilon_3)\cap
	\operatorname{int} A_{\ell}$.
	Next, for $2\leq \ell\leq n-1$, we put $B_{\ell} :=
	(v_{\ell-1}, u_{\ell+1}) \subset A_{\ell-1} \cup A_{\ell} \cup
	A_{\ell+1}$. Observe that the construction of $u_\ell$ and $v_\ell$
	implies that $B_\ell \cap B_{j}=\emptyset$ for all $|\ell - j| \geq 2$.
	Next, let $(N_i)$ be the B-spline basis of the space $S_K$ and let 
	$(\ell(i))_{i= 1}^{L}$ be the increasing sequence of integers 
	so that $\Gamma= \{\ell(i)
	: 1\leq i\leq L\}$ for $L=|\Gamma|\leq n$.
	 We then define the
	set
	\begin{equation*}
		\Lambda( r,s) := \Big\{ j : \supp N_j \cap \Big(\bigcup_{i=r}^s
		B_{\ell(i)}\Big) \neq \emptyset \Big\}
	\end{equation*}
	to consist of those B-spline indices so that the support of the corresponding
	B-spline function intersects the set $\bigcup_{i=r}^s B_{\ell(i)}$.
	Observe that by \eqref{eq:applemma},
	\begin{equation}
		\label{eq:epsest}
		\big(1-\frac{\varepsilon}{3}\big) |I\cap V| \leq \sum_{\ell\in \Gamma}
		|A_{\ell}\cap V| = \Big| \bigcup_{\ell\in \Gamma}
		A_{\ell}\cap V\Big| \leq \Big| \bigcup_{i}
		B_{\ell(i)}\cap V\Big| \leq \Big| \bigcup_{i}
		B_{\ell(i)}\Big|.
	\end{equation}
	Thus, the definition \eqref{eq:eps1} of $\varepsilon_1$  in particular implies 
	\begin{equation}\label{eq:cons_est}
		72\varepsilon_1M 
		\leq  \varepsilon\cdot\tilde{\varepsilon}\cdot \Big|
		\bigcup_i B_{\ell(i)} \Big|.
	\end{equation}

	We continue with defining the functions $(f_j)$ contained in $S_K$ using a stopping time
	construction and 
	first set $j_0=-1$ and $C=(1-\tilde{\varepsilon}/3)\big| \bigcup_{i} B_{\ell(i)}\big|>0$. 
	For $1\leq m\leq M$, if $j_{m-1}$ is already chosen, we
	define $j_m$ to be the smallest integer $\leq L$ so that the function
	\begin{equation}
		\label{eq:cond_def_fm}
		f_m := \sum_{j\in \Lambda(j_{m-1}+2,j_m) } N_j \qquad
		\text{satisfies}\qquad 
		\int f_m(t) \dif t > C\alpha_m.
	\end{equation}
	If no such integer exists, we set $j_m = L$ (however, we will see below
	that for the current choice of parameters, such an integer always exists).
	Additionally, we define
	\begin{equation*}
		f_{M+1} := \sum_{j\in \Lambda(j_{M} + 2, L)} N_j.
	\end{equation*}
	Observe that by the locality of the B-spline basis $(N_i)$, 
	$\supp f_\ell \cap \supp f_m = \emptyset$ for $1\leq \ell <
	m\leq M+1$.
	Based on the collection of functions $(f_m)_{m=1}^{M+1}$, we will define the desired
	function $g$.
	But before we do that, we
	make a few comments about $(f_m)_{m=1}^{M+1}$.

	Note that for $m=1,\ldots, M$, by the minimality of $j_m$,
	\begin{equation*}
		\int \sum_{j\in \Lambda(j_{m-1}+2,j_m -1) } N_j(t)\dif t \leq  C\alpha_m,
	\end{equation*}
	 and therefore, again by the locality of the B-splines $(N_i)$,
	 \begin{equation}\label{eq:alpha_upper}
		\int f_m(t)\dif t \leq C\alpha_m + \int\sum_{j\in
			\Lambda(j_m,j_m) } N_j(t)\dif t \leq C\alpha_m +
			3\varepsilon_1.
	\end{equation}
	Additionally, employing also the definition of $u_\ell$ and $v_\ell$ and
	the fact that the B-splines $(N_i)$ form a partition of unity,
	\begin{equation}
		\label{eq:unionintegral}
		\begin{aligned}
		\Big|\bigcup_{i=j_{m-1}+2}^{j_m}B_{\ell(i)}\Big|&\leq \int f_m(t)\dif t \leq 
		\Big|\bigcup_{i=j_{m-1} + 2}^{j_{m} } (p_{\ell(i)-1},
		p_{\ell(i)+1})\Big| \\
		&\leq\Big|\bigcup_{i=j_{m-1}+2}^{j_m}B_{\ell(i)}\Big| +
			 2n\varepsilon_3.
		 \end{aligned}
	\end{equation}
	Next, we will show
	\begin{equation}\label{eq:B_ell_estimate}
		(1-\tilde{\varepsilon}) \Big|\bigcup_{i} B_{\ell(i)}\Big| \leq
		\Big|\bigcup_{i\leq j_{M}} B_{\ell(i)}\Big| \leq
		(1-\tilde{\varepsilon}/6) \Big|\bigcup_{i} B_{\ell(i)}\Big|.
	\end{equation}
	Indeed, we calculate on the one hand by \eqref{eq:unionintegral} and 
	\eqref{eq:alpha_upper}
	\begin{align*}
		\Big|\bigcup_{i\leq j_{M}} B_{\ell(i)}\Big| &\leq
		\sum_{m=1}^{M}\Big|
		\bigcup_{i=j_{m-1} + 2}^{j_m} B_{\ell(i)}\Big| + \sum_{m=1}^{M}
		|B_{j_m +1}| 
		\leq \sum_{m=1}^{M} \int f_m(t)\dif t + 3\varepsilon_1 M \\
		&\leq \sum_{m=1}^{M} (C\alpha_m + 3\varepsilon_1) +
		3\varepsilon_1 M = C +
		6\varepsilon_1 M
	\end{align*}
	Recalling now that $C= (1-\tilde{\varepsilon}/3) \big| \bigcup_i
	B_{\ell(i)}\big|$ and using \eqref{eq:cons_est} now yields 
	the right hand side of
	\eqref{eq:B_ell_estimate}.

	On the other hand, employing
	\eqref{eq:unionintegral} and \eqref{eq:cond_def_fm}, 
	\begin{align*}
		\Big|\bigcup_{i\leq j_{M}} B_{\ell(i)}\Big| &\geq \sum_{m=1}^{M}
		\Big|\bigcup_{i=j_{m-1} + 2}^{j_m} B_{\ell(i)}\Big| \geq \sum_{m=1}^{M}
		\Big(\int f_m(t)\dif t -  2n\varepsilon_3 \Big)\\
		&\geq C\sum_{m=1}^{M} \alpha_m - 2nM\varepsilon_3 = C -
		2nM\varepsilon_3.
	\end{align*}
	The definition of $C = (1-\tilde{\varepsilon}/3)\big|
	\bigcup_i B_{\ell(i)} \big|$ and $\varepsilon_3 = \varepsilon_1	/(2n)$,
	combined with \eqref{eq:cons_est} gives 
	the left hand inequality in \eqref{eq:B_ell_estimate}.

	The inequality on the right hand side of \eqref{eq:B_ell_estimate},
	combined with  \eqref{eq:cons_est} again,
	allows us to give the following lower estimate of $\int f_{M+1}$:
\begin{equation}
	\label{eq:intfM}
	\begin{aligned}
		\int f_{M+1}(t)\dif t &\geq \Big| \bigcup_{i\geq j_{M}+2} B_{\ell(i)} \Big| \geq 
		\Big| \bigcup_{i> j_{M}} B_{\ell(i)} \Big| - 3\varepsilon_1 
		\geq \frac{\tilde{\varepsilon}}{12} \Big| \bigcup_i B_{\ell(i)}
		\Big|.
	\end{aligned}
\end{equation}
	We are now ready to define the function $g\in S_K\otimes X$ as follows:
	\begin{equation}
		\label{eq:defg}
		g\equiv \sum_{j=1}^{M} f_j\otimes (x_j - \bar x) +
		f_{M+1}\otimes\sum_{j=1}^M
		\beta_j (x_j-\bar x) ,
	\end{equation}
	where 
	\begin{equation}
		\label{eq:defbeta}
		\beta_j = \frac{C\alpha_j - \int f_j(t)\dif t}{\int f_{M+1}(t)\dif
		t}, \qquad 1\leq j\leq M.
	\end{equation}
	We proceed by proving $\int g=0$ and properties \eqref{eq:lem2-2}--\eqref{eq:lem2-3} for $g$:

	The fact that $\int g=0$ follows from  a straightforward calculation using 
	\eqref{eq:defbeta} and the assumption $\sum_{j=1}^M
	\alpha_j (x_j - \bar x)=0$.
	\eqref{eq:lem2-2} follows from $\supp g \subset [p_1,
	p_{n}]\subset \operatorname{int}I$.
Next, observe that  by definition of $g$ and
$f_1,\ldots, f_{M+1}$, on each $\mathcal F_K$-atom contained in the set 
$B:=\cup_{m=1}^{M} \cup_{i=j_{m-1} +2}^{j_m} B_{\ell(i)}$, the function $\bar x
+ g$ is constant with a value in $\cup_i\{x_i\}$.
	Setting $\mathscr A_1= \{A \subset B : A\text{ atom in }\mathcal F_K\}$
	and  $\mathscr A_2 = \mathscr A\setminus \mathscr A_1$ now shows
	\eqref{eq:lem2-3}(a).
	Moreover, by \eqref{eq:eps1}, \eqref{eq:epsest} and
	\eqref{eq:B_ell_estimate},
	\begin{align*}
		\Big| \bigcup_{J\in\mathscr{A}_1} J\cap V \Big| &= \Big| \bigcup_{m=1}^{M}
		\bigcup_{i=j_{m-1} +2}^{j_m} B_{\ell(i)} \cap V\Big| \geq \Big|
		\bigcup_{i\leq j_{M}} B_{\ell(i)} \cap V \Big| -
		3M\varepsilon_1 \\
		&\geq \Big| \bigcup_i B_{\ell(i)}\cap V \Big| - \Big|
		\bigcup_{i>j_{M}} B_{\ell(i)}\Big|  - \frac{\varepsilon |I\cap V|}{24} \\
		&\geq \big( 1 - \frac{\varepsilon}{3} \big) |I\cap V| -
		\tilde{\varepsilon} \Big|\bigcup_i B_{\ell(i)}\Big| -
		\frac{\varepsilon|I\cap V|}{24}.
	\end{align*}
	Since $\tilde{\varepsilon}|\cup_i B_{\ell(i)} |\leq \tilde{\varepsilon}
	|I|\leq \varepsilon|I\cap V|/3$ by definition of $\tilde{\varepsilon}$,
	we conclude $|\cup_{J\in \mathscr A_1} J\cap V| \geq (1-\varepsilon)|I\cap V|$, proving
	also 
	\eqref{eq:lem2-3}(b).
	Next, we note that for $t\in \supp f_j$ with $j\leq
	M$, we have
	\begin{equation*}
		\bar x + g(t) = \bar x + f_j(t) (x_j - \bar x)  =
		f_j(t) x_j + \big(1-f_j(t)\big) \bar x.
	\end{equation*}
	Since $f_j(t)\in [0,1]$ and $\bar x$ is a convex combination of the elements
	$(x_j)$, we get \eqref{eq:lem2-3}(c) in this case.
	If $t\in \supp f_{M+1}$, we calculate
	\begin{equation}
		\label{eq:uniformestimate_g}
		\begin{aligned}
		\bar x + g(t)  &= \bar x +f_{M+1}(t) \sum_{j=1}^M \beta_j (x_j - \bar x)
		\\
		&= \big(1-f_{M+1}(t)\big) \bar x + f_{M+1}(t) \big( \bar x +
		\sum_{j=1}^M \beta_j (x_j - \bar x)\big).
	\end{aligned}
	\end{equation}
	We have by the lower estimate \eqref{eq:intfM} for $\int f_{M+1}$ and by
	\eqref{eq:alpha_upper}
\begin{align*}
	\sum_{j=1}^M |\beta_j| &\leq \frac{12}{\tilde{\varepsilon} \big|\bigcup_i
	B_{\ell(i)}\big|}   \sum_{j\leq M} \Big(\int f_j - C\alpha_j\Big)  
	\leq \frac{12}{\tilde{\varepsilon} \big|\bigcup_i
	B_{\ell(i)}\big|} ( 3\varepsilon_1 M ),
\end{align*}
which, by \eqref{eq:cons_est},
is smaller
than $\varepsilon/2$. Therefore, combining this 
with \eqref{eq:uniformestimate_g} yields property \eqref{eq:lem2-3}(c) for $t\in \supp
f_{M+1}$ 
by setting $\lambda_1=1-f_{M+1}$, $\lambda_2 = f_{M+1}$, $y_1 = \bar x$, $y_2 =
\bar x + \sum_j \beta_j (x_j - \bar x)$.
Thus, we finished Step 1 of constructing a function $g$ with mean zero and
properties \eqref{eq:lem2-2},\eqref{eq:lem2-3}.
The next step is to construct a function $g$ so that additionally all
of its moments up to order $k$ vanish. 

\textsc{Step 2:}
%
	Set $\tilde{\varepsilon} = 1- (1-\varepsilon)^{1/3}>0$.
	We write $a=\inf I$, $b=\sup I$ and choose $c\in I$ so that $R := (c,b)$
	satisfies
	$0<|R \cap V|= \tilde{\varepsilon}|I\cap V|$. Define $L=I\setminus R$.
	Let $(N_i)$ be the B-spline basis of $S_{K_R}$, 
	where we choose the integer $K_R$ so that
	we can select B-spline
	functions $(N_{m_i})_{i=0}^{k-1}$ 
	that $\supp N_{m_i} \subset \operatorname{int} R$ for any $i=0,\ldots,k-1$
	and $\supp N_{m_i}\cap \supp N_{m_j} = \emptyset$ for $i\neq j$.
	We then form the $k\times k$-matrix
	\begin{equation*}
		A  = \Big( \int_R t^i N_{m_j}(t)\dif t \Big)_{i,j=0}^{k-1}.
	\end{equation*}
	The matrix $(t_\ell^i)_{i,\ell=0}^{k-1}$ is a Vandermonde matrix having
	positive determinant for $t_0 < \cdots< t_{k-1}$. Moreover, the matrix
	$(N_{m_j}(t_\ell))_{j,\ell=0}^{k-1}$ is a diagonal matrix having positive
	entries if $t_\ell\in \operatorname{int}\supp N_{m_\ell}$ for
	$\ell=0,\ldots, k-1$. For other choices of $(t_\ell)$, the determinant
	of $(N_{m_j}(t_\ell))_{j,\ell=0}^{k-1}$ vanishes.
	Therefore, Lemma \ref{lem:comp} implies that $\det A\neq 0$ and $A$ is invertible.

	Next, we choose 
	$\varepsilon_1= \tilde{\varepsilon}/\big(k(1+\tilde{\varepsilon}) \| A^{-1} \|_\infty
	|L| \big)$
	and apply Lemma \ref{lem:Gamma} with the parameters $\varepsilon_1$,
	$\varepsilon_2 = \tilde{\varepsilon}$ and the interval $L$ to obtain
	a positive integer $n$ so
	that
	for the partition $(A_\ell)_{\ell=1}^n$ of $L$ with $n|A_\ell \cap V| =
	|L\cap V|$ and $\sup A_{\ell-1} = \inf A_{\ell}$,
	the set
		$\Gamma = \{ 2\leq \ell \leq n-1 : \max(|A_{\ell-1}|,
		|A_{\ell}|, |A_{\ell+1}|) \leq \varepsilon_1 \}$
	satisfies
	\begin{equation*}
		\sum_{\ell\in \Gamma} |A_{\ell} \cap V| \geq
		(1-\tilde{\varepsilon})
		|L\cap V|.
	\end{equation*}
	We now apply the construction of Step 1 on every set $A_{\ell}$, 
	$\ell\in \Gamma$, with the parameters $\bar x$, $(x_j)_{j=1}^M$,
	$(\alpha_j)_{j=1}^M$, $\tilde{\varepsilon}$ 
	to get functions $(g_{\ell})$  with zero mean having 
	properties \eqref{eq:lem2-2},\eqref{eq:lem2-3}
	with $I$ replaced by $A_{\ell}$. On $L$, we define the function 
	\begin{equation*}
		g(t) := \sum_{\ell\in \Gamma} g_\ell(t),\qquad t\in L.
	\end{equation*}
	Let $z_j:= \int_L t^j g(t)\dif t$ for $j=0,\ldots,k-1$. Observe that,
	since $\int_{A_{\ell}} g_\ell(t)\dif t=0$ and $\|g_\ell\|_{L^\infty_X}
	\leq 1+\tilde{\varepsilon}$ by \eqref{eq:lem2-3} 
	 and $|A_\ell|\leq
	\varepsilon_1$, we get for all $j=0,\ldots,k-1$,
	\begin{align*}
		\| z_j \| &= \Big\|  \sum_{\ell\in\Gamma} \int_{A_{\ell}}
		t^jg_\ell(t)\dif t\Big\| = \Big\|  \sum_{\ell\in\Gamma} \int_{A_{\ell}}
		\big(t^j-(\inf A_\ell)^j\big)\cdot g_\ell(t)\dif t\Big\| \\
		&\leq j \sum_{\ell\in\Gamma} |A_{\ell}|\int_{A_{\ell}} \|
		g_\ell(t) \| 
		\dif t \\
		&\leq j \varepsilon_1 (1+\tilde{\varepsilon}) 
		|L| \leq \tilde{\varepsilon}\cdot \|A^{-1}\|_\infty^{-1}.
	\end{align*}
	In order to have $\int_I t^j g(t)\dif t = 0$ for all $j=0,\ldots,k-1$, we want to define $g$ on
	$R=I\setminus L$
	so that
	\begin{equation}\label{eq:condR}
		\int_R t^j g(t)\dif t = - z_j,\qquad j=0,\ldots,k-1.
	\end{equation}
	Assume that $g$ on $R$ is of the form
	\begin{equation*}
		g(t) = \sum_{i=0}^{k-1} N_{m_i}(t)w_i,\qquad t\in R
	\end{equation*}
	for some $(w_i)_{i=0}^{k-1}$ contained in $X$. Then, \eqref{eq:condR} is
	equivalent to 
	\begin{equation*}
		Aw = -z
	\end{equation*}
	by writing $w=(w_0,\ldots,w_{k-1})^T$ and $z=(z_0,\ldots,z_{k-1})^T$.
	Defining $w := -A^{-1} z$ and employing 
	the estimate for $\|z\|_\infty$ above, we obtain
	\begin{equation}\label{eq:esty}
		\|w\|_\infty \leq \|A^{-1}\|_\infty \|z\|_\infty \leq
		\tilde{\varepsilon}.
	\end{equation}

	The definition of $g$ immediately yields properties \eqref{eq:lem2-1},
	\eqref{eq:lem2-2}.
	From the application of the construction in Step 1 to each
	$A_{\ell}$, $\ell\in\Gamma$, we obtained collections $\mathscr
	A_1(\ell)$ of disjoint subintervals of $A_\ell$ that are atoms in $\mathcal
	F_{K_\ell}$ for some positive integer $K_\ell\geq N$ satisfying that $\bar x +
	g_\ell$ is constant on each $J\in \mathscr A_1(\ell)$ taking values in
	$\operatorname{conv}\{ x_i : 1\leq i\leq M \}$ and $|\cup_{J\in\mathscr A_1(\ell)} J\cap V| \geq
	(1-\tilde{\varepsilon}) |A_\ell \cap V|$.
	Let $B:= \cup_\ell\cup_{J\in\mathscr A_1(\ell)} J$ and define
	$\mathscr A_1$ to be the collection $\{J\subset B : J\text{ atom in }\mathcal
	F_K\}$ where $K:= \max(\max_\ell K_\ell, K_R	)$ and define $\mathscr A := \{J\subset
	I:J\text{ atom in }\mathcal F_K\},$ $\mathscr A_2 := \mathscr A\setminus
	\mathscr A_1$.

	Then, \eqref{eq:lem2-3}(a) is satisfied by the corresponding
	property of each
	$g_\ell$.
	\eqref{eq:lem2-3}(b) follows from the calculation
	\begin{align*}
		\Big| \bigcup_{J\in\mathscr A_1} J \cap V\Big| & \geq
		(1-\tilde{\varepsilon})\sum_{\ell\in\Gamma}|A_{\ell}\cap V|
		\geq (1-\tilde{\varepsilon})^2 |L\cap V| \\
		&\geq (1-\tilde{\varepsilon})^3 |I\cap V|= 
		(1-\varepsilon)|I\cap V|.
	\end{align*}
	Property \eqref{eq:lem2-3}(c) on $L$ is a consequence of property
	\eqref{eq:lem2-3}(c) for the functions $g_\ell$. We can write $\alpha_j
	\equiv \sum_\ell \alpha_{j,\ell} N_\ell$ for some non-negative
	coefficients $(\alpha_{j,\ell})$ that have the property $\sum_{j=1}^M
	\alpha_{j,\ell} =1$ for each $\ell$. Therefore, on $R$, we have
	\begin{equation*}
		\bar x(t) + g(t) = \sum_{j=1}^M \alpha_j(t)x_j +
		\sum_{i=0}^{k-1}  N_{m_i}(t) w_i = \sum_\ell N_\ell(t)\Big( \sum_{j=1}^M \alpha_{j,\ell}
		x_j + \sum_{i=0}^{k-1} \delta_{\ell,m_i}w_i\Big) ,
	\end{equation*}
	which, since $\|w\|_\infty \leq \tilde{\varepsilon}\leq \varepsilon$ and
	$\sum_{j=1}^M \alpha_{j,\ell} = 1$ for each $\ell$, implies
	\eqref{eq:lem2-3}(c) on $R$.
\end{proof}

We now use Lemma \ref{lem:moments} inductively to prove Theorem
\ref{thm:general}.
\begin{proof}[Proof of Theorem \ref{thm:general}]
	We assume that $X$ does not have the RNP. Then,
	by Theorem \ref{thm:dentable},
	the ball $B(0,1/2)\subset X$ contains a 
	non-dentable convex set $D$ 
	satisfying
	\begin{equation*}
		x\in \overline{\operatorname{conv}}\big( D \setminus B(x,2\delta)
		\big),\qquad x\in D
	\end{equation*}
	for some parameter $2\delta$. 
	Defining $D_0 = D+
	B(0,\delta/2)$ and, for $j\geq 1$, $D_j = D_{j-1} +
	B(0,2^{-j-1}\delta)$, we use the remark after Definition
	\ref{def:dentable} to get that all the sets $(D_j)$ are contained in
	$B(0,1)$, are convex and
	\begin{equation*}
		x\in \operatorname{conv}\big(D_j \setminus B(x,\delta)\big),\qquad
		x\in D_j,\ j\geq 0.
	\end{equation*}
	We will assume without restriction that
	$\eta\leq \delta$.
	
	
	Let $x_{0,1}\in D_0$ arbitrary and set $f_0 \equiv \charfun_{[0,1]}
	\otimes x_{0,1}\in
	S_{m_0}\otimes X$ on $I_{0,1} := [0,1]$ for $m_0 = 0$. By
	$P_j$, we will denote the $L^1_X$-extension of the orthogonal projection operator onto $S_{m_j}$,
	where we assume that $(m_j)_{j=1}^n$ and  $(f_j)_{j=1}^n$ with $f_j \in
	S_{m_j}\otimes X$
	for each $j=1,\ldots,n$  are constructed in 
	such a way that for all $j=0,\ldots,n$,
\begin{enumerate}
	\item $P_{j-1} f_j = f_{j-1}$ if $j\geq 1$,
	\item on all atoms $I$ in $\mathcal F_{m_j}$, $f_j$ has the form
		\begin{equation*}
			f_j \equiv \sum_{\ell} \lambda_\ell \otimes y_\ell,\qquad
			\text{finite sum}
		\end{equation*}
		for functions $\lambda_\ell\in S_{m_j}$ with non-negative
		B-spline coefficients, 
		$\sum_\ell \lambda_\ell\equiv 1$  and some
		$y_\ell\in D_j$,
	\item there exists a finite collection of disjoint intervals
		$(I_{j,i})_i$
		that are atoms in $\mathcal F_{m_j}$ so that (setting $C_j = \cup_i
		I_{j,i}$)
		\begin{enumerate}
			\item for all $i$, $f_j \equiv x_{j,i}\in D_j$ on
				$I_{j,i}$,
			\item $\|f_j - f_{j-1}\|_X \geq \delta$ on $C_j\cap C_{j-1}$
				if
				$j\geq 1$,
			\item $|C_j\cap C_{j-1}\cap V| \geq
				(1
				-2^{-j}\eta)|V|
				$ if $j\geq 1$,
			\item $|C_j \cap V| \geq
				(1
				-2^{-j-2}\eta)|V|
				$,
			\item $|I_{j,i}\cap V|>0$ for every $i$.
		\end{enumerate}
\end{enumerate}
We will then perform the construction of $m_{n+1}$, $f_{n+1}$ and the collection
$(I_{n+1,i})$ of atoms in $\mathcal F_{m_{n+1}}$ having
 properties  (1)--(3) for $j=n+1$.
Define the collection $\mathscr C = \{A \text{ atom of }\mathcal F_{m_{n}} : |A\cap
V|>0\}$. We will distinguish the two cases  $B\in \mathscr C_1 := \{A\in
	\mathscr C: A= I_{n,i}\text{ for some }i\}$ and $B\in \mathscr
	C_2:=\mathscr C\setminus \mathscr C_1$.

	\textsc{Case 1: }$B\in \mathscr{C}_1$: here, $B=I_{n,i}$ for some $i$ and
we use the fact that on $B$, $f_n = x_B:=x_{n,i}\in D_n$ and write
\begin{equation*}
	x_{B} = \sum_{\ell=1}^{M_B} \alpha_{B,\ell} x_{B,\ell}
\end{equation*}
with some positive numbers $(\alpha_{B,\ell})$ satisfying $\sum_\ell \alpha_{B,\ell} = 1$, 
some $x_{B,\ell}\in D_n$ and $\|x_{B} - x_{B,\ell}\|\geq \delta$ for any 
$\ell=1,\ldots, M_B$.
We apply Lemma \ref{lem:moments} 
to the interval
$B$ with this decomposition and with the parameter
$\varepsilon=\eta_n:= 2^{-n-3}\eta$.
This yields a
function $g_{B}\in S_{K_{B}}\otimes X$ for some positive integer $K_{B}$ that has the properties

	\begin{enumerate}[(i)]
		\item $\int t^\ell g_{B}(t)\dif t = 0, \qquad 0\leq \ell\leq
			k-1$,
		\item $\supp g_{B} \subset \operatorname{int} B$,
		\item we have a  splitting of the collection $\mathscr A_B =
			\{A\subset B: A \text{ is atom in }\mathcal F_{K_B}\}$ into
			$\mathscr A_{B,1} \cup \mathscr A_{B,2}$  so that  

			\begin{enumerate}[(a)]
				\item on each $J\in \mathscr A_{B,1}$,
					$f_n+g_B=x_B +
					g_B$
					is constant on $J$ taking values in
					$\cup_\ell \{ x_{B,\ell} \}$,
				\item $|\cup_{J\in\mathscr A_{B,1}} J\cap V| \geq
					(1-\eta_n)|B\cap V|$,
				\item on each $J\in \mathscr A_{B,2}$, the
					function $f_n + g_B$ can be written as 
					\begin{equation*}
						f_n(t) + g_B(t) = x_B + g_B(t) =
						\sum_\ell \lambda_{B,\ell}(t)
						y_{B,\ell}
					\end{equation*}
					for some
					functions $\lambda_{B,\ell} \in
					S_{K_B}$ having non-negative B-spline
					coefficients with $\sum_\ell
					\lambda_{B,\ell}\equiv 1$ and
					$y_{B,\ell}\in
					\operatorname{conv}\{x_{B,\ell}: 1\leq
					j\leq M_B\} +
					B(0,\eta_n)$.

			\end{enumerate}
	\end{enumerate}

	\textsc{Case 2: } $B\in \mathscr C_2$:
	 on $B$, $f_n$ is of the form
	\begin{equation*}
		f_n(t) = \sum_{\ell=1}^{M_B} \lambda_{\ell}(t)y_{\ell}
	\end{equation*}
	for some functions $\lambda_{\ell}\in S_{m_n}$ having non-negative
	B-spline coefficients with
	$\sum_\ell \lambda_{\ell}\equiv 1$ and some $y_{\ell}\in D_n$.
	Applying Lemma \ref{lem:moments} 
	with the parameter $\eta_n =2^{-n-3}\eta$, 
	we obtain a function $g_B\in S_{K_B}\otimes X$ (for some
	positive integer $K_B$) that has the properties 
	\begin{enumerate}[(i)]
		\item $\int t^\ell g_{B}(t)\dif t = 0, \qquad 0\leq \ell\leq
			k-1$,
		\item $\supp g_{B} \subset \operatorname{int} B$,

		\item we have a  splitting of the collection $\mathscr A_B =
			\{A\subset B: A \text{ is atom in }\mathcal F_{K_B}\}$ into
			$\mathscr A_{B,1} \cup \mathscr A_{B,2}$  so that  

			\begin{enumerate}[(a)]
				\item for each $J\in \mathscr A_{B,1}$, $f_n +
					g_B$
					is constant on $J$ taking values in
					$\operatorname{conv}\{y_\ell : 1\leq
					\ell\leq M_B\}$,
				\item $|\cup_{J\in\mathscr A_{B,1}} J\cap V| \geq
					(1-\eta_n)|B\cap V|$,
				\item for each $J\in \mathscr A_{B,2}$, the
					function $f_n + g_B$ can be written as 
					\begin{equation*}
						f_n(t) + g_B(t) = 
						\sum_\ell \lambda_{B,\ell}(t)
						y_{B,\ell} 
					\end{equation*}
					for some
					functions $\lambda_{B,\ell} \in
					S_{K_B}$ having non-negative B-spline
					coefficients with $\sum_\ell
					\lambda_{B,\ell}\equiv 1$  and
					$y_{B,\ell}\in
					\operatorname{conv}\{y_j: 1\leq
					j\leq M_B\} +
					B(0,\eta_n)$.

			\end{enumerate}
	\end{enumerate}

	Having treated those two cases, 
	we define the index $m_{n+1}:=\max\{ K_B : B\in\mathscr C\}$
	and
	\begin{equation*}
		f_{n+1} = f_n + \sum_{B\in\mathscr{C}} g_{B}.
	\end{equation*}
	The new collection $(I_{n+1,i})$ is defined to be the decomposition of
	the set $\cup_{B\in\mathscr C} \cup_{J\in \mathscr A_{B,1}} J$ (from the above
	construction)
	into $\mathcal
	F_{m_{n+1}}$-atoms, after deleting those  $\mathcal F_{m_{n+1}}$-atoms $I$
	with $|I\cap V|=0$.
	Since $D_n$ is convex and $\eta\leq \delta$, the corresponding function values of $f_{n+1}$
	are contained in $D_n +
	B(0,\eta_n)\subset D_{n+1}$ and we will  enumerate them as $(x_{n+1,i})_i$
	accordingly. We additionally set $C_{n+1} := \cup_i I_{n+1,i}$.
	
	With these
	definitions, we will successively show properties (1)--(3) for $j=n+1$.
	Since the function $g=P_n f_{n+1} \in S_{m_n}\otimes X$ is characterized by the
	condition
	\begin{equation*}
		\int g(t)s(t)\dif t = \int f_{n+1}(t)s(t)\dif t,\qquad s\in
		S_{m_n},
	\end{equation*}
	property (1) for $j=n+1$ follows if we show that $\int
	g_{B}(t)s(t)\dif t = 0$ for any
	$s\in S_{m_n}$ and any $B\in\mathscr C$. But this is a consequence of
	(i) for $g_{B}$ (in both case 1 and case 2),
	since $s\in S_{m_n}$ is a polynomial of order $k$ on $B$.

	Property (2) now is a consequence of (iii) (again for both cases 1 and
	2), we just remark once again that $D_{n}+ B(0,\eta_n) \subset D_{n+1}$
	due to  $\eta\leq \delta$.
	Properties (3a), (3b) and (3e) are direct consequences of the
	construction. (3d) follows from (iii)(b) in cases 1 and 2 since
	\begin{align*}
		|C_{n+1} \cap V| &=\Big| \bigcup_{B\in\mathscr C} \bigcup_{J\in
			\mathscr A_{B,1}} J\cap V\Big| 
			= \sum_{B\in\mathscr C} \Big| \bigcup_{J\in \mathscr
				A_{B,1}} J\cap V\Big| \\
				&\geq (1-\eta_n) \sum_{B\in \mathscr C} |B\cap
				V| = (1-\eta_n)|V|
	\end{align*}
	and $\eta_n
	= 2^{-n-3}\eta$. 
	For property (3c), we calculate
	\begin{equation*}
		|C_{n+1} \cap C_n \cap V| \geq (1-\eta_n) |C_n\cap V| \geq
		(1-\eta_n) (1-2^{-n-2}\eta) |V|
	\end{equation*}
	by (iii)(b) in case 1 and by induction hypothesis. Since $\eta_n =
	2^{-n-3}\eta$, we get $(1-\eta_n)(1-2^{-n-2}\eta)\geq 1-2^{-(n+1)}\eta$
	and this proves (3c) for $j=n+1$.

	
	Finally, we note that due to  (2) and (3)(c),
	the sequence $(m_n)$, the $k$-martingale spline sequence $(f_n)$ and the sets
	$E_n:= C_n \cap C_{n-1}\cap V$ have the properties that are desired in the
	theorem. 
\end{proof}
\subsection*{Acknowledgments}
The author is grateful to P. F. X. Müller for many fruitful discussions 
related to
the underlying article. This research is supported by the FWF-project
Nr.P27723.


\bibliographystyle{plain}
\bibliography{radon}
\end{document}